\newcommand{\diint}{\displaystyle\int}
\newcommand{\dsum}{\displaystyle\sum}
\numberwithin{equation}{section}
\newtheorem{theorem}{Theorem}
\newtheorem{lemma}{Lemma}
\newtheorem{remark}{Remark}
\numberwithin{theorem}{section}
\numberwithin{corollary}{section}
\numberwithin{lemma}{section}
\numberwithin{definition}{section}
\numberwithin{proposition}{section}
\numberwithin{remark}{section}
\begin{document}

\title[]{Some weighted isoperimetric problems on $\mathbb{R}^N _+ $ with stable half balls have no solutions
}

\author[F. Brock]{F. Brock$^1$}
\author[F. Chiacchio]{F. Chiacchio$^2$}


\setcounter{footnote}{1}
\footnotetext{Swansea University, Department of 
Mathematics,  Computational Foundry, College of 
Science,
Swansea 
University Bay Campus, Fabian Way, Swansea SA1 8EN, Wales, UK, 
email:{ friedemann.brock@swansea.ac.uk}}

\setcounter{footnote}{2}
\footnotetext{Dipartimento di Matematica e Applicazioni ``R. Caccioppoli'',
Universit\`a degli Studi di Napoli Federico II,
Complesso Monte S. Angelo, via Cintia, 80126 Napoli, Italy;  
e-mail: {fchiacch@unina.it}}

\begin{abstract} 
We show the counter-intuitive fact that some weighted isoperimetric problems on the half-space $ \mathbb{R}^N _+  $, for which half-balls centered at the origin are stable, have  no solutions. 
A particular case is the measure $d\mu = x_N ^{\alpha } \, dx$, with $\alpha \in (-1,0)$. Some results on stability and nonexistence for weighted isoperimetric problems on $\mathbb{R}^N $ are also obtained.  

\medskip

\noindent
{\sl Key words: Isoperimetric inequality, Wirtinger inequality,  eigenvalue problem}  
\rm 
\\[0.1cm]
{\sl 2000 Mathematics Subject Classification:} 51M16, 46E35, 46E30, 35P15 
\rm 
\end{abstract}
\maketitle

\section{Introduction}
A \textsl{manifold with density} is a manifold endowed with a positive function, 
the \textsl{density}, which weights both the volume and the perimeter. 
This mathematical subject
is attracting an increasing attention from 
the mathematical community.
The related bibliography is very wide and, in this short note, it is impossible to give 
an exhaustive account of it. Hence we remind the interested reader to \cite{morganbook} and  \cite{MP}  and the references therein. 
One natural issue in this setting  consists of finding families of densities
for which one can determine the explicit form of the  isoperimetric set, 
see for instance \cite{S},  \cite{Bo}, \cite{MadernaSalsa}, \cite{CMV}, \cite{DHHT}, \cite{XR}, \cite{BCM},  \cite{BCM_Pot_Anal}, \cite{Cham}.

The problem becomes more challenging  when perimeter and volume carry two different weights. One important example is when the manifold is  $\mathbb{R}^N$, ($N\geq 2$), and the two weight functions are powers of the distance from the origin, see \cite{ABCMP2017}, and the references cited therein.
 The theorem proved  in \cite{ABCMP2017}  states that all spheres about the origin are isoperimetric for a certain range of the powers. One can modify this problem by inserting a further homogeneous perturbation term, namely $x_{N}^{\alpha}$, both in the volume and in the perimeter, see
\cite{ABCMP} and \cite{ABCMPapplana}:
\vspace{0.1cm} 
\begin{equation}
\text{Minimize }
 \int_{\partial \Omega }|x|^{k}x_{N}^{\alpha }\,{\mathcal{H}}_{N-1}(dx) \,\,
\text{among all smooth sets }
 \Omega \subset \mathbb{R}_{+}^{N}
\text{ satisfying }
{\int_{\Omega }|x|^{\ell}x_{N}^{\alpha}\,dx=1}  \tag{{\bf P}}
\end{equation}
\vspace{0.1cm}  
\noindent where   $\mathbb{R}^{N} _{+}  := \{ x \in \mathbb{R}^N :\, x_N >0\} $ and
  $k,\ell , \alpha  \in \mathbb{R}$.
 
Adapting some new methods introduced in \cite{ABCMP2017}, the authors find, for any  given positive number $\alpha $, a range of parameters $k$ and $\ell$
 for which the isoperimetric sets are   
intersections of balls centered  at the origin with $\mathbb{R}^N_{+}$.

In the present paper we discuss again problem
{\bf (P)}, but for $\alpha \in (-1,0)$. It turns out that for a certain range of the parameters $k$ and $\ell $, the problem has no solution despite the fact  that half-balls $B_{R} \cap \mathbb{R}^N _+ $ are stable (for precise meaning of stability see Section 4). More precisely our main result is the following
\begin{theorem}
\label{maintheorem}
Assume that $\alpha \in (-1,0)$, and that the conditions 
\begin{eqnarray}
\label{condk}
 & & k+N+\alpha -1 < \sqrt{(N-1)(N+\alpha -1)},
\\
\label{nonex}
 & & N(k+ N+\alpha -1) < (\ell+N+\alpha ) (N-1),
\\
\label{nec3intro}
 & &
\ell+1 \leq k +\frac{N+\alpha -1}{k+N+\alpha -1 }
\end{eqnarray}
are satisfied. Then the isoperimetric problem {\bf (P)} has no solution, 
nevertheless half-balls $B_R \cap \mathbb{R}^N _{+} $ are stable.
\end{theorem}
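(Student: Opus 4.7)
The proof splits into two essentially independent parts: the nonexistence, which is a competitor/escape-to-infinity argument driven by \eqref{nonex}, and the stability of half-balls, which follows from a second-variation calculation reducing to a weighted Wirtinger inequality on the unit half-sphere, with \eqref{nec3intro} providing the necessary eigenvalue inequality. Condition \eqref{condk} will enter only as the compatibility condition ensuring that \eqref{nonex} and \eqref{nec3intro} are simultaneously satisfiable.

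For the nonexistence, I would test the problem with the sequence of small balls $B_{\rho_h}(he_N)$, $h\to\infty$, $\rho_h=o(h)$. On each such ball the densities $|x|^{\ell}x_N^{\alpha}$ and $|x|^{k}x_N^{\alpha}$ are, up to a factor $1+O(\rho_h/h)$, equal to the constants $h^{\ell+\alpha}$ and $h^{k+\alpha}$ respectively. Hence the volume normalization forces $\rho_h\asymp h^{-(\ell+\alpha)/N}$, and the weighted perimeter satisfies
\[
P(B_{\rho_h}(he_N))\;\asymp\;h^{k+\alpha}\rho_h^{N-1}\;\asymp\;h^{[Nk+\alpha-(N-1)\ell]/N}.
\]
A direct rearrangement shows that \eqref{nonex} is exactly $Nk+\alpha-(N-1)\ell<0$, so $P\to 0$. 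Thus $\inf(\mathbf{P})=0$, and since every admissible smooth set has strictly positive weighted perimeter, no minimizer exists.

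For the stability, I would parametrize a smooth volume-preserving normal perturbation of $\Omega_R:=B_R\cap\mathbb{R}^N_+$ by the radial graph $r(\xi)=R+tu(\xi)$, $\xi\in S^{N-1}_+$, with Neumann condition on the equator. Expanding both the weighted perimeter and volume functionals to second order in $t$, I find that volume preservation reads $\int_{S^{N-1}_+}u\,\xi_N^{\alpha}\,d\sigma=0$, the Lagrange multiplier is $\lambda=(k+N+\alpha-1)R^{k-\ell-1}$, and
\[
P''(0)-\lambda V''(0)=R^{k+N+\alpha-3}\!\!\int_{S^{N-1}_+}\!\!\bigl[\,|\nabla_{\sigma}u|^2-(k+N+\alpha-1)(\ell+1-k)\,u^2\,\bigr]\xi_N^{\alpha}\,d\sigma.
\]
Stability then reduces to the weighted Wirtinger inequality
\[
\int_{S^{N-1}_+}|\nabla_{\sigma}u|^2\xi_N^{\alpha}\,d\sigma\;\geq\;(N+\alpha-1)\int_{S^{N-1}_+}u^2\xi_N^{\alpha}\,d\sigma
\]
for $u$ orthogonal to the constants in the measure $\xi_N^{\alpha}\,d\sigma$; this is the first nonconstant-eigenvalue inequality for the weighted Laplacian $-\xi_N^{-\alpha}\mathrm{div}_{\sigma}(\xi_N^{\alpha}\nabla_{\sigma}\cdot)$ on $S^{N-1}_+$ with Neumann data, whose sharp constant $N+\alpha-1$ is attained by $u=\xi_i$ for $i<N$. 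Condition \eqref{nec3intro}, rewritten as $(k+N+\alpha-1)(\ell+1-k)\leq N+\alpha-1$, then delivers $P''(0)-\lambda V''(0)\geq 0$.

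The principal technical obstacle is the rigorous verification of the weighted Wirtinger inequality on the half-sphere when the weight $\xi_N^{\alpha}$ is singular ($\alpha<0$) and the correct handling of the Neumann condition on the equator where the weight degenerates; the second-variation expansion itself must also be justified in this weighted setting, in particular the integrations by parts and the absence of anomalous boundary contributions at the equator. To close the picture one notices that \eqref{nonex} yields the lower bound $\ell+1-k>(k+N+\alpha-1)/(N-1)$, which substituted into \eqref{nec3intro} gives $(k+N+\alpha-1)^2<(N-1)(N+\alpha-1)$, i.e.\ \eqref{condk}; thus \eqref{condk} is the natural compatibility condition on the parameters, and under all three hypotheses the half-balls are stable yet the problem admits no minimizer.
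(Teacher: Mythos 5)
Your proposal is correct and follows essentially the same route as the paper: the nonexistence comes from testing with balls escaping to infinity along the $x_N$-axis (the paper uses unit balls $B_1(te_N)$ and the scale-invariant ratio $\mathcal{R}_{k,\ell,N,\alpha}$, which is equivalent to your volume-normalized small balls by the homogeneity \eqref{hom1}), driven precisely by \eqref{nonex}; stability reduces to the sharp weighted Wirtinger constant $\mu_1^{\alpha}(\mathbb{S}^{N-1}_+)=N+\alpha-1$, established in Theorem \ref{mu1} via the compact embedding of Section 2 and separation of variables, with \eqref{nec3intro} giving the needed inequality in the second variation. Your closing observation that \eqref{condk} follows from \eqref{nonex} and \eqref{nec3intro} (via $\ell+1-k>\tfrac{k+N+\alpha-1}{N-1}$ and $(\ell+1-k)(k+N+\alpha-1)\le N+\alpha-1$) is correct and is the content of Remark \ref{remark} on compatibility; the paper lists \eqref{condk} as a hypothesis but it is indeed redundant given the other two when $k+N+\alpha-1>0$.
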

Note that the conditions (\ref{condk}), (\ref{nonex}) and (\ref{nec3intro}) are satisfied in the model case $k=\ell =0$.

The delicate part of the proof of Theorem 1.1 is to find a stability criterion for half-balls. It is well-known - see e.g. \cite{ABCMP}, Theorem 4.1 - that an equivalent task is to determine  the best constant,
 $\mu_1 ^{\alpha} 
( \mathbb{S}^{N-1}_+)
$, in a weighted Poincar\'{e}-Wirtinger inequality on the half-sphere 
$\mathbb{S}^{N-1} _+ := \mathbb{S}^{N-1} \cap \mathbb{R}^{N}_ {+}$. 

In Section 2  we prove a compact imbedding property for some weighted spaces for functions defined on the upper half-sphere.   
 To this aim we use stereographic coordinates, since, in this coordinate system, the metric is just the conformal factor times the identity. This allows us to use  an already known compact imbedding result for weighted spaces in $\mathbb{R}^{N-1} $. 

In Section 3 we first note that 
$\mu_1 ^{\alpha} ( \mathbb{S}^{N-1}_+ )$ 
represents the first nontrivial Neumann eigenvalue of 
some self-adjoint compact operator on the half-sphere. 
In view of the imbedding result this implies that 
 $\mu_1 ^{\alpha} 
( \mathbb{S}^{N-1}_+ )
$ appears as a minimum of an appropriate Rayleigh quotient. 
Then 
we write the operator in spherical coordinates and, using separation of variables and comparing the eigenvalues of two Sturm-Liouville problems, we show that the exact value of $\mu_1 ^{\alpha}
( \mathbb{S}^{N-1}_+ )
$ is $N+\alpha -1$. 
This implies the stability of  half-spheres in view of Theorem 4.1 in \cite{ABCMP}, which holds true irrespectively of the sign of $\alpha $.

In order to prove that the problem has no solution, we show in Section 4 that the ``isoperimetric ratio'' (see \eqref{rayl1})  
for  a unit ball centered at $(0, \ldots , 0, t)$ tends  to zero when $t$ goes to infinity. 
This completes the proof of Theorem 1.1.

Our paper concludes with a few remarks on stability and nonexistence for some weighted isoperimetric problems on $\mathbb{R}^N $ in Section 5.

\section{Notation and preliminary results}
Throughout this paper the following notation will be in force:
\begin{eqnarray*}
 & & 
\mathbb{R}_{+}^{N}=\left\{ x=(x_{1},...,x_{N})\in \mathbb{R}
^{N}: x_{N}>0\right\} , \ |x|:= \sqrt{\sum_{i=1} ^N x_i ^2 },\  N\geq 2 ,
\\
 & & H= \{ x= (x_1 , \ldots , x_N ):\, x_N =0 \} ,
\\ 
 & & 
B_R (x^0):=
 \left\{ 
x \in \mathbb{R}^{N}: \, |x-x^0 |  <R 
 \right\} , 
\\
 & &
B_R:=B_R (0), \quad B_R ^+ := B_R \cap \mathbb{R}^N _+ , \quad (
x^{0} \in \mathbb{R}^{N}, \ R>0),
\\
 & & 
\mathbb{S}^{N-1}=\partial B_1 , \ 
\mathbb{S}_{+}^{N-1}:=
\mathbb{S}^{N-1}\cap \mathbb{R}_{+}^{N},
\end{eqnarray*}
$$
 \mathbb{B}_{1}:= \left\{ y = (y_1 , \ldots y_{N-1})  \in \mathbb{R}^{N-1}: \, |y| := \sqrt{\dsum
\limits_{i=1}^{N-1}y_{i}^{2}}<1 \right\}.
$$
The
stereographic projection 
$$
 \mathbb{S}_{+}^{N-1}  \ni \zeta \longmapsto  y = S(\zeta )  \in \mathbb{B} _1
$$ 
 from the south pole $
P_{S}=(0,..,0,-1)$ and its inverse are given by 
\begin{equation*}
\left\{ 
\begin{array}{ccc}
\zeta _{i}=\dfrac{2y_{i}}{\left\vert y\right\vert ^{2}+1} & \text{for} & 
1\leq i\leq N-1 \\ 
&  &  \\ 
\zeta _{N}=\dfrac{1-\left\vert y\right\vert ^{2}}{\left\vert y\right\vert
^{2}+1}
 & \text{for} & i=N
\end{array}
\right. 
\end{equation*}
and
\begin{equation*}
y_{i}=\dfrac{\zeta _{i}}{1+\zeta _{N}}\text{ \ for \ }1\leq i\leq N-1,
\end{equation*}
respectively.
As well known, in this coordinate system, see e.g. \cite{Grafakos} p. 444, the  metric on 
$\mathbb{S}^{N-1}$  
is 
\begin{equation*}
g_{ij}(y)=\left( \dfrac{2}{\left\vert y \right\vert ^{2}+1}\right) ^{2}\delta
_{ij}.
\end{equation*}
Hence $d\sigma $, the volume element  on 
$\mathbb{S}^{N-1}$, is given by 
\begin{equation*}
d\sigma  
=\sqrt{\det g_{ij}(y)}\, dy=\left( \dfrac{2}{\left\vert y\right\vert ^{2}+1
}\right) ^{N-1}\, dy.
\end{equation*} 
For any function $u: \mathbb{S}^{N-1 } _+ \to \mathbb{R} $ we define $\hat{u} : \mathbb{B}_1 \to \mathbb{R} $ by 
$$
\hat{u}(y) := u(\zeta ) , \quad (y= S(\zeta ), \, \zeta \in  \mathbb{S} ^{N-1} _+).
$$
Note that, if
 $u$ is a smooth function, then
$$
|\nabla _\mathbb{S} u (\zeta ) | = \sqrt{g^{ij} \hat{u}_{y_i} (x) \hat{u}_{y_j } (y) } 
=
|\nabla \hat{u} (y) | \cdot \frac{2}{|y|^2 +1}      , \quad (\zeta \in \mathbb{S}^{N-1} _+ ).
$$
For $\alpha \in \left(-1,  +\infty  \right) $, we consider the measure 
$d\sigma _{\alpha }$,
defined  on $\mathbb{S}_{+}^{N-1}$, given by $d\sigma$
times $\zeta_{N}^{\alpha}$.
In stereographic coordinates, such a measure takes the following
form
\begin{equation*}
d\sigma _{\alpha }=\left( \dfrac{1-\left\vert y\right\vert ^{2}}{\left\vert
y\right\vert ^{2}+1}\right) ^{\alpha }
\cdot
\left( \dfrac{2}{\left\vert
y\right\vert ^{2}+1}\right) ^{N-1}\, dy.
\end{equation*}
Define the weighted Sobolev space $W^{1,2}\left( \mathbb{S}
_{+}^{N-1};\, d\sigma _{\alpha }\right) $ as the closure of $C^{\infty }(
\mathbb{S}_{+}^{N-1})$ under the norm 
\begin{equation*}
\left\Vert u\right\Vert _{W^{1,2}\left( \mathbb{S}_{+}^{N-1};\, d\sigma
_{\alpha }\right) }^{2} :=
\left\Vert  u\right\Vert _{L^{2}\left( \mathbb{S}
_{+}^{N-1};\, d\sigma _{\alpha }\right) }^{2}
+
\left \Vert \nabla _{\mathbb{S}} u \right \Vert_{L^{2}
\left( \mathbb{S}_{+}^{N-1};\, d\sigma _{\alpha }\right) }^{2}.
\end{equation*}

\begin{theorem}
\label{Comp_Embedd}
The space $W^{1,2}\left( \mathbb{S}
_{+}^{N-1};\, d\sigma _{\alpha }\right) $ is \ compactly embedded in $
L^{2}\left( \mathbb{S}_{+}^{N-1};\, d\sigma _{\alpha }\right) .$
\end{theorem}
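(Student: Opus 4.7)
The strategy is to pull the problem back to the flat unit disk $\mathbb{B}_1\subset\mathbb{R}^{N-1}$ via the stereographic projection $S$ and reduce to a standard weighted Rellich-type theorem in Euclidean space. Combining the formula for $d\sigma_\alpha$ with the conformal identity $|\nabla_{\mathbb{S}} u(\zeta)|=|\nabla\hat u(y)|\cdot 2/(|y|^2+1)$, I would compute
\begin{align*}
\int_{\mathbb{S}^{N-1}_+}|u|^2\,d\sigma_\alpha &=\int_{\mathbb{B}_1}\hat u^2\,(1-|y|^2)^\alpha\,\frac{2^{N-1}}{(1+|y|^2)^{N+\alpha-1}}\,dy,\\
\int_{\mathbb{S}^{N-1}_+}|\nabla_{\mathbb{S}} u|^2\,d\sigma_\alpha &=\int_{\mathbb{B}_1}|\nabla\hat u|^2\,(1-|y|^2)^\alpha\,\frac{2^{N+1}}{(1+|y|^2)^{N+\alpha+1}}\,dy.
\end{align*}
Since the factor $(1+|y|^2)^{-s}$ is bounded above and below by positive constants on $\overline{\mathbb{B}_1}$, the $W^{1,2}(\mathbb{S}^{N-1}_+;d\sigma_\alpha)$-norm is equivalent to the weighted Sobolev norm $\int_{\mathbb{B}_1}(\hat u^2+|\nabla\hat u|^2)\,w(y)\,dy$ with $w(y):=(1-|y|^2)^\alpha$, and likewise for the $L^2$-norms.

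Hence the stated compactness reduces to showing that $W^{1,2}(\mathbb{B}_1;w\,dy)$ embeds compactly in $L^2(\mathbb{B}_1;w\,dy)$. For $\alpha\geq 0$ the weight is bounded on $\mathbb{B}_1$ and the claim follows from the classical Rellich--Kondrachov theorem. For $\alpha\in(-1,0)$, $w$ is comparable to $\mathrm{dist}(y,\partial\mathbb{B}_1)^\alpha$, and the distance-to-the-boundary weight with exponent in $(-1,1)$ belongs to the Muckenhoupt class $A_2(\mathbb{R}^{N-1})$; at this step I would invoke a known compact embedding result for weighted Sobolev spaces with $A_p$ weights on bounded Lipschitz domains (as treated for instance in Kufner's monograph on weighted Sobolev spaces, or in subsequent work of Chua).

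The main obstacle is indeed the range $\alpha\in(-1,0)$, where the weight blows up along the equator $\partial\mathbb{S}^{N-1}_+$: the stereographic change of variables is precisely what converts this intrinsic singularity on the half-sphere into a standard power-of-distance weight on a Euclidean ball, for which weighted Sobolev embedding theory is available off the shelf. If one prefers to avoid citing this machinery as a black box, a direct argument is also feasible: given a bounded sequence $\{\hat u_n\}$ in $W^{1,2}(\mathbb{B}_1;w\,dy)$, use classical Rellich on the interior subdomain $\{|y|\leq 1-\varepsilon\}$ (where $w$ is bounded) to extract a subsequence converging in $L^2$ there, and control the annular tail $\{1-\varepsilon<|y|<1\}$ uniformly in $n$ by a weighted Hardy/Poincar\'e inequality for $w$; letting $\varepsilon\downarrow 0$ then yields convergence in $L^2(\mathbb{B}_1;w\,dy)$.
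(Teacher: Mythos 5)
Your proposal is correct and follows essentially the same route as the paper: stereographic projection to $\mathbb{B}_1$, absorption of the bounded factors involving $(1+|y|^2)$, and reduction to a weighted Rellich theorem with distance-to-boundary weight $(1-|y|)^\alpha$; the paper cites Theorem 8.8 of Gurka--Opic for the final compactness step, while you invoke Muckenhoupt $A_2$ theory (or the interior-Rellich-plus-Hardy decomposition), but these are interchangeable here. One minor slip: the conformal identity should read $|\nabla_{\mathbb{S}}u(\zeta)|=\frac{|y|^2+1}{2}\,|\nabla\hat u(y)|$, not with the reciprocal factor, so the exponent in your gradient integral should be $N+\alpha-3$ rather than $N+\alpha+1$ --- but this does not affect the argument since the resulting smooth factor is still bounded above and below on $\mathbb{B}_1$.
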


\begin{proof}
As already noticed the stereographic projection from the south pole of $\mathbb{S}
_{+}^{N-1}$ is just $\mathbb{B}_{1}.$ Let us first write the weighted norm of a
function in stereographic coordinates.
\begin{eqnarray*}
\left\Vert \nabla _{\mathbb{S}} u\right\Vert _{L^{2}\left( \mathbb{S}_{+}^{N-1};\, d\sigma
_{\alpha }\right) }^{2} &=&\sum_{i,j}\int_{\mathbb{B}_{1}}\left[ g^{ij}\frac{
\partial \hat{u}}{\partial y_{i}}\frac{\partial \hat{u}}{\partial y_{j}}\right] \left( 
\dfrac{2}{\left\vert y\right\vert ^{2}+1}\right) ^{N-1}\left( \dfrac{
1-\left\vert y\right\vert ^{2}}{\left\vert y\right\vert ^{2}+1}\right)
^{\alpha }\, dy \\
&=&\int_{\mathbb{B}_{1}}\left[ \left( \dfrac{2}{\left\vert y\right\vert ^{2}+1}
\right) ^{-2}\left\vert \nabla \hat{u}\right\vert ^{2}\right] \left( \dfrac{2}{
\left\vert y\right\vert ^{2}+1}\right) ^{N-1}\left( \dfrac{1-\left\vert
y\right\vert ^{2}}{\left\vert y\right\vert ^{2}+1}\right) ^{\alpha }\, dy 
\\
&=&\int_{\mathbb{B}_{1}}\left\vert \nabla \hat{u}\right\vert ^{2}\dfrac{2^{N-3} \left(
\left\vert y\right\vert +1\right) ^{\alpha } }{\left( \left\vert y\right\vert
^{2}+1\right) ^{N-3+\alpha }} \cdot
\left( 1-\left\vert y\right\vert \right) ^{\alpha } \, 
dy
\end{eqnarray*}
and 
\begin{equation*}
\left\Vert u\right\Vert _{L^{2}\left( \mathbb{S}_{+}^{N-1}\, d\sigma _{\alpha
}\right) }^{2}
=
\int_{\mathbb{B}_{1}}\hat{u}^{2}
\frac{2^{N-1}\left( |y|+1 \right) ^{\alpha }
}{
\left( |y| ^{2}+1
\right) ^{N-1+\alpha }} 
\cdot 
\left( 1- \left\vert y\right\vert \right) ^{\alpha }
\, dy.
\end{equation*}
Note that there exists $C\in \left( 0,1 \right) $ such that for any $y\in \mathbb{B}_{1}$ there holds 

\begin{equation}
\label{C1}
C\leq \dfrac{2^{N-3}\left( \left\vert y\right\vert +1\right) ^{\alpha } }{\left(
\left\vert y\right\vert ^{2}+1\right) ^{N-3+\alpha }}\leq \frac{1}{C}
\end{equation}
and

\begin{equation}
\label{C2}
C\leq 2^{N-1}\left( \dfrac{1}{\left\vert y\right\vert ^{2}+1}\right)
^{N-1+\alpha }\left( \left\vert y\right\vert +1\right) ^{\alpha }\leq \frac{1
}{C}.
\end{equation}
Now consider a bounded sequence
$ \left\{ u_{n}\right\} _{n\in \mathbb{N}} $
of functions in $W^{1,2}\left( 
\mathbb{S}_{+}^{N-1};\, d\sigma _{\alpha }\right) ,$ that is, 
\begin{equation}
\left\Vert u_{n}\right\Vert _{W^{1,2}\left( \mathbb{S}_{+}^{N-1};\, d\sigma
_{\alpha }\right) } 
\leq C
\quad
\forall n\in 
\mathbb{N}.  
\label{Bdd_W}
\end{equation}
Writing 
\begin{equation*}
d(y)=\text{dist}\left( y,\partial \mathbb{B}_{1}\right) =1-|y| , 
\end{equation*}
and using (\ref{C1}) and  (\ref{C2}) one immediately realizes that
 (\ref{Bdd_W}) is equivalent to 
\begin{equation*}
\int_{\mathbb{B}_{1}}\left\vert \nabla \hat{u}_{n}\right\vert ^{2}d(y)^{\alpha
}\, dy+\int_{\mathbb{B}_{1}}\hat{u}_{n}^{2}\, d(y)^{\alpha }\, dy\leq C.
\end{equation*}
Now using Theorem 8.8 in \cite{GO1} we deduce that, up to a not relabelled
subsequence, we have that there exists a function \thinspace \thinspace $
u\in W^{1,2}\left( \mathbb{S}_{+}^{N-1};\, d\sigma _{\alpha }\right) $ such
that 
\begin{equation*}
\int_{\mathbb{B}_{1}}\left\vert \hat{u_{n}}-\hat{u}\right\vert ^{2}\, d(y)^{\alpha }\, dy\rightarrow
0
\end{equation*}
and therefore 
\begin{equation*}
u_{n}\rightarrow u\text{ strongly in }L^{2}\left( \mathbb{S}
_{+}^{N-1};\, d\sigma _{\alpha }\right) .
\end{equation*}
\end{proof}

\begin{theorem}
\label{WeighPoin}
The following Weighted Poincar\'{e}
inequality holds true
\begin{equation}
\left\Vert u-\frac{1}{\sigma _{\alpha }\left( \mathbb{S}_{+}^{N-1}\right) }
\int_{\mathbb{S}_{+}^{N-1}}u\, d\sigma _{\alpha }\right\Vert _{L^{2}\left( 
\mathbb{S}_{+}^{N-1};\, d\sigma _{\alpha }\right) }
\leq 
C \left\Vert \nabla _\mathbb{S}
u\right\Vert _{L^{2}\left( \mathbb{S}_{+}^{N-1};\, d\sigma _{\alpha }\right) },
\label{WP}
\end{equation}
where $C\in (0,+\infty )$ is a constant which does not depend on $u$.
\end{theorem}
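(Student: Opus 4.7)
The argument will follow the standard compactness-by-contradiction scheme, with Theorem \ref{Comp_Embedd} as the key input. Suppose \eqref{WP} fails. Then for each $n\in\mathbb{N}$ there exists $u_n\in W^{1,2}(\mathbb{S}_+^{N-1};\, d\sigma_\alpha)$ with
\[
\Big\| u_n - \frac{1}{\sigma_\alpha(\mathbb{S}_+^{N-1})} \int_{\mathbb{S}_+^{N-1}} u_n\, d\sigma_\alpha \Big\|_{L^2(\mathbb{S}_+^{N-1};\, d\sigma_\alpha)} > n\, \|\nabla_{\mathbb{S}} u_n\|_{L^2(\mathbb{S}_+^{N-1};\, d\sigma_\alpha)}.
\]
After subtracting the $d\sigma_\alpha$-mean and normalizing in $L^2(d\sigma_\alpha)$, I obtain a sequence $\{v_n\}$ enjoying $\int_{\mathbb{S}_+^{N-1}} v_n\, d\sigma_\alpha = 0$, $\|v_n\|_{L^2(d\sigma_\alpha)}=1$, and $\|\nabla_{\mathbb{S}} v_n\|_{L^2(d\sigma_\alpha)}\to 0$.

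In particular $\{v_n\}$ is bounded in $W^{1,2}(\mathbb{S}_+^{N-1};\, d\sigma_\alpha)$, so Theorem \ref{Comp_Embedd} gives a subsequence converging strongly in $L^2(d\sigma_\alpha)$ to some $v$, which inherits the zero-mean condition and satisfies $\|v\|_{L^2(d\sigma_\alpha)}=1$. Reflexivity of $W^{1,2}(\mathbb{S}_+^{N-1};\, d\sigma_\alpha)$ lets me pass, along a further subsequence, to a weak limit in $W^{1,2}(d\sigma_\alpha)$ that must coincide with $v$; combined with the strong convergence $\nabla_{\mathbb{S}} v_n\to 0$ in $L^2(d\sigma_\alpha)$ and uniqueness of weak limits, this yields $\nabla_{\mathbb{S}} v\equiv 0$.

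The point I expect to require the most care, and therefore the main obstacle, is to deduce that $v$ is a.e.\ constant on $\mathbb{S}_+^{N-1}$, since for $\alpha\in(-1,0)$ the weight $\zeta_N^\alpha$ degenerates near the equator and the usual ``zero gradient implies constant'' result is not automatic. My plan is to reduce the question to Euclidean coordinates via the stereographic projection of Theorem \ref{Comp_Embedd}: it suffices to show that $\hat v\in W^{1,2}(\mathbb{B}_1;\, d(y)^\alpha\, dy)$ with vanishing weak gradient is constant. Since $d(y)^\alpha$ is bounded above and below on every compact subset of $\mathbb{B}_1$, the function $\hat v$ belongs to $W^{1,2}_{\mathrm{loc}}(\mathbb{B}_1)$ (with the unweighted measure) and has zero distributional gradient, whence it is constant on the connected open set $\mathbb{B}_1$. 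Undoing the stereographic change shows that $v$ is constant on $\mathbb{S}_+^{N-1}$; combined with the zero-mean condition this forces $v\equiv 0$, contradicting $\|v\|_{L^2(d\sigma_\alpha)}=1$, and therefore \eqref{WP} must hold.
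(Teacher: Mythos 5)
Your argument is correct and coincides with the paper's own proof: the same contradiction--plus--compactness scheme, normalizing a putative counterexample sequence to have zero $d\sigma_\alpha$-mean, unit $L^2(d\sigma_\alpha)$-norm, and vanishing gradient, then invoking Theorem~\ref{Comp_Embedd} to extract a strong $L^2$-limit $v$. You are merely more explicit at the two places the paper compresses into the single phrase ``which is impossible'' --- namely in identifying $\nabla_{\mathbb{S}} v\equiv 0$ via weak compactness in the Hilbert space $W^{1,2}(\mathbb{S}_+^{N-1};\,d\sigma_\alpha)$, and in deducing constancy of $v$ from the local unweighted Sobolev regularity of $\hat v$ on the connected set $\mathbb{B}_1$ --- and both of these elaborations are sound.
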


\begin{proof}
One can obtain the proof repeating the arguments of
the classical one for the unweighted case (see, e.g., \cite{LL}, Th. 8.11, page 218). We
include it for reader's convenience.  Assume, arguing by contradiction,
that there exists a sequence $\left\{ u_{k}\right\} _{k\in \mathbb{N}
}\subset W^{1,2}\left( \mathbb{S}_{+}^{N-1};d\sigma _{\alpha }\right) $ such
that 
\begin{equation*}
\left\Vert u_{k}-\frac{1}{\sigma _{\alpha }\left( \mathbb{S}
_{+}^{N-1}\right) }\int_{\mathbb{S}_{+}^{N-1}}u_{k}\, d\sigma _{\alpha
}\right\Vert _{L^{2}\left( \mathbb{S}_{+}^{N-1};\, d\sigma _{\alpha }\right)
}
\geq 
k\left\Vert \nabla _S u_{k}\right\Vert _{L^{2}\left( \mathbb{S}
_{+}^{N-1};\, d\sigma _{\alpha }\right) }
\end{equation*}
Consider now the normalized sequence 
\begin{equation*}
v_{k}:=
\frac{u_{k}-\dfrac{1}{\sigma _{\alpha }\left( \mathbb{S}
_{+}^{N-1}\right) }\diint_{\mathbb{S}_{+}^{N-1}}u_{k}\, d\sigma _{\alpha }}
{\left\Vert u_{k}-\dfrac{1}{\sigma _{\alpha }\left( \mathbb{S}
_{+}^{N-1}\right) }\diint_{\mathbb{S}_{+}^{N-1}}u_{k}\, d\sigma _{\alpha
}\right\Vert _{L^{2}\left( \mathbb{S}_{+}^{N-1};\, d\sigma _{\alpha }\right) }}
\,\text{\ } \quad \ \forall k\in \mathbb{N}.
\end{equation*}
Clearly 
\begin{equation}
\int_{\mathbb{S}_{+}^{N-1}}v_{k}\, d\sigma _{\alpha }=0\,,\text{ \ }\left\Vert
v_{k}\right\Vert _{L^{2}\left( \mathbb{S}_{+}^{N-1};\, d\sigma _{\alpha
}\right) }=1\text{ \ and }\left\Vert \nabla _S v_{k}\right\Vert _{L^{2}\left( 
\mathbb{S}_{+}^{N-1};\, d\sigma _{\alpha }\right) }\leq \frac{1}{k}  \label{v_k}
\end{equation}
for any $k\in \mathbb{N}.$

Thanks to Theorem \ref{Comp_Embedd} we have that there
exists a function $v\in W^{1,2}\left( \mathbb{S}_{+}^{N-1};\, d\sigma _{\alpha
}\right) $ such that, up to a subsequence, 
\begin{equation*}
v_{k}\rightarrow v\text{ strongly in }L^{2}\left( \mathbb{S}
_{+}^{N-1};\, d\sigma _{\alpha }\right) .
\end{equation*}
Finally from (\ref{v_k}) we deduce that 
\begin{equation*}
\int_{\mathbb{S}_{+}^{N-1}}v\, d\sigma _{\alpha }=0, \quad 
\left\Vert
v\right\Vert _{L^{2}\left( \mathbb{S}_{+}^{N-1};\, d\sigma _{\alpha }\right)
}=1,
\quad \mbox{and }\ \nabla _{\mathbb{S}} v =0 \ \mbox{ a.e. on $\mathbb{S} ^{N-1}_+ $,}
\end{equation*}
which is impossible.
\end{proof}

\begin{remark} Note the aim of the next Section is to find the
best constant in (\ref{WP}).
\end{remark}

Using Theorem \ref{Comp_Embedd}  and Theorem \ref{WeighPoin} we immediately deduce the following

\bigskip
\begin{theorem}
\label{V_alpha}
Let
\begin{equation*}
V_{\alpha }:=\left\{ u\in W^{1,2}\left( \mathbb{S}_{+}^{N-1};\, d\sigma
_{\alpha }\right) :\, \int_{\mathbb{S}_{+}^{N-1}}u\, d\sigma _{\alpha }=0\right\} .
\end{equation*}
Every sequence 
$
\left\{ u_{n}\right\} _{n\in \mathbb{N}}\subset V_{\alpha } $ such that
$$
\left\Vert
\nabla _\mathbb{S} u_{n}\right\Vert _{L^{2}\left( \mathbb{S}_{+}^{N-1};\, d\sigma _{\alpha
}\right) }\leq C\text{ \ }\forall n\in \mathbb{N}
$$
for some $C\in (0,+\infty ),$ admits a subsequence, still denoted by $
u_{n},$ such that 
\begin{equation}
\left\Vert u_{n}-u\right\Vert _{L^{2}\left( \mathbb{S}_{+}^{N-1};\, d\sigma
_{\alpha }\right) }\rightarrow 0\text{ for some }u\in V_{\alpha }.
\label{CE_2}
\end{equation}
\end{theorem}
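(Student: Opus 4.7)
The plan is to combine the two previous results: use Theorem \ref{WeighPoin} (weighted Poincar\'e) to upgrade a gradient bound to a full $W^{1,2}$-bound, and then invoke Theorem \ref{Comp_Embedd} (compact embedding) to extract a strongly $L^2$-convergent subsequence, with a short argument at the end to check that the limit lies in $V_\alpha$.

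\textbf{Step 1: Promote the gradient bound to a Sobolev bound.} Since every $u_n \in V_\alpha$ has vanishing $d\sigma_\alpha$-mean, the weighted Poincar\'e inequality (\ref{WP}) yields
\[
\| u_n \|_{L^2(\mathbb{S}^{N-1}_+;\, d\sigma_\alpha)} \le C \| \nabla_{\mathbb{S}} u_n \|_{L^2(\mathbb{S}^{N-1}_+;\, d\sigma_\alpha)} \le CC'.
\]
Combined with the hypothesis that $\|\nabla_{\mathbb{S}} u_n\|_{L^2(d\sigma_\alpha)}$ is uniformly bounded, this shows that $\{u_n\}$ is bounded in $W^{1,2}(\mathbb{S}^{N-1}_+;\, d\sigma_\alpha)$.

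\textbf{Step 2: Extract a strongly convergent subsequence.} Since $W^{1,2}(\mathbb{S}^{N-1}_+;\, d\sigma_\alpha)$ is a Hilbert space, the bounded sequence $\{u_n\}$ admits a weakly convergent subsequence (not relabelled) to some $u \in W^{1,2}(\mathbb{S}^{N-1}_+;\, d\sigma_\alpha)$. By Theorem \ref{Comp_Embedd} the embedding into $L^2(\mathbb{S}^{N-1}_+;\, d\sigma_\alpha)$ is compact, so along a further subsequence we obtain
\[
\|u_n - u\|_{L^2(\mathbb{S}^{N-1}_+;\, d\sigma_\alpha)} \to 0,
\]
which is the desired conclusion (\ref{CE_2}).

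\textbf{Step 3: Verify that the limit lies in $V_\alpha$.} It remains only to check that $\int_{\mathbb{S}^{N-1}_+} u \, d\sigma_\alpha = 0$. Because $\alpha > -1$, the measure $d\sigma_\alpha$ is finite on $\mathbb{S}^{N-1}_+$, so by Cauchy--Schwarz the linear functional $v \mapsto \int_{\mathbb{S}^{N-1}_+} v\, d\sigma_\alpha$ is continuous on $L^2(\mathbb{S}^{N-1}_+;\, d\sigma_\alpha)$. Since each $u_n$ has vanishing mean and $u_n \to u$ in $L^2(d\sigma_\alpha)$, the mean of $u$ also vanishes, i.e.\ $u \in V_\alpha$.

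The main (and only) technical ingredient is the finiteness of $\sigma_\alpha(\mathbb{S}^{N-1}_+)$, which is guaranteed by the assumption $\alpha \in (-1, +\infty)$ already used throughout this section; once this is in hand, the argument is a routine application of the two theorems just proved, with no new obstacle.
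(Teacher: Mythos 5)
Your proof is correct and is exactly the argument the paper intends: the paper states this theorem with no proof, merely remarking that it follows ``immediately'' from Theorem~\ref{Comp_Embedd} and Theorem~\ref{WeighPoin}, and your three steps (Poincar\'e to upgrade the gradient bound, compact embedding to extract the strongly convergent subsequence, continuity of the mean functional to place the limit in $V_\alpha$) are precisely the routine details being elided.
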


\section{An optimal weighted Wirtinger inequality}

The spherical coordinates on $\mathbb{S}_{+}^{N-1}$ are given by 
\begin{equation*}
\left\{ 
\begin{array}{cc}
\zeta _{N}= & \cos \theta _{1}\text{ \ \ \ \ \ \ \ \ \ \ \ \ \ \ \ \ \ \ \ \
\ \ \ \ \ \ \ \ \ \ \ \ \ } \\ 
\zeta _{N-1}= & \sin \theta _{1}\cos \theta _{2}\text{ \ \ \ \ \ \ \ \ \ \ \
\ \ \ \ \ \ \ \ \ \ \ \ \ } \\ 
\zeta _{N-2}= & \sin \theta _{1}\sin \theta _{2}\cos \theta _{3}\text{ \ \ \
\ \ \ \ \ \ \ \ \ \ \ \ } \\ 
. &  \\ 
. &  \\ 
\zeta _{2}= & \sin \theta _{1}\sin \theta _{2}\cdot ...\cdot \sin \theta
_{N-2}\cos \theta _{N-1} \\ 
\zeta _{1}= & \sin \theta _{1}\sin \theta _{2}\cdot ...\cdot \sin \theta
_{N-2}\sin \theta _{N-1}
\end{array}
\right. 
\end{equation*}
where 
\begin{equation*}
\theta _{1}\in \left( 0,\frac{\pi }{2}\right) ;\text{ \ }\theta
_{2},...,\theta _{N-2}\left( 0,\pi \right) ;\text{ \ }\theta _{N-1}\in
\left( 0,2\pi \right) .
\end{equation*}
Let $\Delta _{\mathbb{S}^{m}}$ be the classical Laplace Beltrami operator on 
$\mathbb{S}^{m}$.  We consider the following differential operator 
\begin{equation*}
\Delta _{\mathbb{S}^{N-1}}^{\alpha }u:=\frac{1}{\sin ^{N-2}\theta _{1}}\frac{
\partial }{\partial \theta _{1}}\left( \sin ^{N-2}\theta _{1}\cos ^{\alpha
}\theta _{1}\frac{\partial u}{\partial \theta _{1}}\right) +\frac{\cos
^{\alpha }\theta _{1}}{\sin ^{2}\theta _{1}}\Delta _{\mathbb{S}^{N-2}}u.
\end{equation*}
Note that
\begin{equation*}
 \Delta _{\mathbb{S}^{N-1}}^{0 }u =\Delta
_{\mathbb{S}^{N-1}}u.
\end{equation*}

Finally we will denote by  $\mu _{1}^{\alpha }(\mathbb{S}_{+}^{N-1})$ the
first non-trivial eigenvalue of the following problem 
\begin{equation}
\left\{ 
\begin{array}{cc}
\text{ \ \ }-\Delta _{\mathbb{S}^{N-1}}^{\alpha }u=\mu \cos ^{\alpha }\theta
_{1}u & \text{on }\mathbb{S}_{+}^{N-1} \\ 
&  \\ 
\diint_{\mathbb{S}_{+}^{N-1}}ud\sigma _{\alpha }=0.\text{ \ \ \ \ \ \ } & 
\end{array}
\right.   \label{EP_N}
\end{equation}

Note that, by Theorem  \ref{V_alpha}, $\mu _{1}^{\alpha }(\mathbb{S}
_{+}^{N-1})$ has the following variational characterization 
\begin{equation*}
\mu _{1}^{\alpha }(\mathbb{S}_{+}^{N-1})
=
\min \left\{ 
\frac{
\diint_{\mathbb{S}
_{+}^{N-1}}\left\vert \nabla _{\mathbb{S}}u\right\vert ^{2}\, d\sigma _{\alpha }
}
{\diint_{\mathbb{S}_{+}^{N-1}}u^{2}\, d\sigma _{\alpha }},\text{ with }u\in
W^{1,2}\left( \mathbb{S}_{+}^{N-1};\, d\sigma _{\alpha }\right) \backslash
\left\{ 0\right\} :\, \int_{\mathbb{S}_{+}^{N-1}}u\, d\sigma _{\alpha }=0\right\} .
\end{equation*}
Indeed, the differential operator appearing in (\ref{EP_N}) is self-adjoint
and compact.

\begin{theorem}
\label{mu1}
The following holds true:
\begin{equation*}
\mu _{1}^{\alpha }(\mathbb{S}_{+}^{N-1})=N+\alpha -1.
\end{equation*}
\end{theorem}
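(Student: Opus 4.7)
The strategy is to establish $\mu_1^\alpha(\mathbb{S}_+^{N-1}) = N+\alpha-1$ by matching upper and lower bounds using the variational characterization displayed just before the theorem. For the upper bound, I test with $u = \zeta_{N-1} = \sin\theta_1\cos\theta_2$. It lies in $V_\alpha$ because $d\sigma_\alpha$ is invariant under $\zeta_{N-1}\mapsto -\zeta_{N-1}$ while $\zeta_{N-1}$ changes sign, so $\int \zeta_{N-1}\,d\sigma_\alpha = 0$. Writing $\zeta_{N-1} = f(\theta_1)Y(\omega)$ with $f=\sin\theta_1$ and $Y=\cos\theta_2$ (a degree-one spherical harmonic on $\mathbb{S}^{N-2}$ with $-\Delta_{\mathbb{S}^{N-2}}Y = (N-2)Y$), a direct calculation using the definition of $\Delta_{\mathbb{S}^{N-1}}^\alpha$ shows $-\Delta_{\mathbb{S}^{N-1}}^\alpha\zeta_{N-1} = (N+\alpha-1)\cos^\alpha\theta_1\,\zeta_{N-1}$, so the Rayleigh quotient of $\zeta_{N-1}$ is exactly $N+\alpha-1$ and $\mu_1^\alpha \leq N+\alpha-1$.

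For the lower bound, I would separate variables on $\mathbb{S}_+^{N-1}\simeq(0,\pi/2)\times\mathbb{S}^{N-2}$: expand an admissible $u$ as $u(\theta_1,\omega)=\sum_{j\geq 0}f_j(\theta_1)Y_j(\omega)$, where $\{Y_j\}$ is an $L^2(\mathbb{S}^{N-2})$-orthonormal basis of spherical harmonics satisfying $-\Delta_{\mathbb{S}^{N-2}}Y_j = \lambda_j Y_j$, $\lambda_j = j(j+N-3)$. Since $d\sigma_\alpha$ is $\omega$-independent, the Rayleigh quotient decouples mode by mode, and the problem reduces to two one-dimensional Sturm--Liouville problems on $(0,\pi/2)$ with weight $\sigma(\theta_1):=\sin^{N-2}\theta_1\cos^\alpha\theta_1$: for $j\geq 1$, an unconstrained problem with the extra positive potential $\lambda_j/\sin^2\theta_1$; for $j=0$, a problem without potential but subject to the mean-zero constraint $\int_0^{\pi/2} f_0\,\sigma\,d\theta_1 = 0$.

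For $j\geq 1$, monotonicity in $\lambda_j$ reduces matters to $j=1$: pointwise in $f$, the Rayleigh quotient is nondecreasing in $\lambda_j$, so $\inf R_j \geq \inf R_1$. The upper-bound computation already identifies $f=\sin\theta_1$ as a strictly positive eigenfunction of the $j=1$ SL problem with eigenvalue $N+\alpha-1$, which is therefore the ground state by standard Sturm--Liouville theory, giving $\inf R_1 = N+\alpha-1$.

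The main obstacle is the $j=0$ case, where the mean-zero constraint prevents a direct comparison with $j=1$. My plan is to exhibit the first nontrivial eigenfunction explicitly as $F(\theta_1) := \cos^2\theta_1 - (\alpha+1)/(N+\alpha)$. A direct substitution verifies that $F$ solves the $j=0$ SL equation with eigenvalue $2(N+\alpha)$, and the Beta-function identity $\int_0^{\pi/2}\cos^{\alpha+2}\theta_1\sin^{N-2}\theta_1\,d\theta_1 = \frac{\alpha+1}{N+\alpha}\int_0^{\pi/2}\cos^\alpha\theta_1\sin^{N-2}\theta_1\,d\theta_1$ gives $\int_0^{\pi/2}F\,\sigma\,d\theta_1 = 0$. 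Since $(\alpha+1)/(N+\alpha)\in(0,1)$ for $\alpha\in(-1,0)$ and $N\geq 2$, $F$ changes sign exactly once in $(0,\pi/2)$, so by Sturm oscillation theory (applicable because the SL operator has compact resolvent by Theorems \ref{Comp_Embedd}--\ref{V_alpha}) $F$ is the first nontrivial eigenfunction, giving $\inf R_0 = 2(N+\alpha)$. Since $N+\alpha > -1$ implies $2(N+\alpha) > N+\alpha-1$, combining with the $j\geq 1$ case yields $\mu_1^\alpha \geq N+\alpha-1$.
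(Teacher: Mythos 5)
Your proof is correct, and it follows the paper's overall template (separate variables on $\mathbb{S}^{N-1}_+ = (0,\pi/2)\times\mathbb{S}^{N-2}$, reduce to a family of one-dimensional Sturm--Liouville problems indexed by the degree $j$ of the spherical harmonic, and identify $\sin\theta_1$ as the ground state of the $j=1$ equation with eigenvalue $N+\alpha-1$), but you handle the radial mode $j=0$ by a genuinely different and in fact sharper device. The paper argues indirectly: the first nontrivial radial Neumann eigenfunction $g_0$ has a single interior zero $\widehat\theta$, so $\mu_0$ equals the Dirichlet ground-state eigenvalue of the cone $\{0<\theta_1<\widehat\theta\}$; by domain monotonicity this exceeds the Dirichlet ground state on the whole half-sphere, which it computes to be $(N-1)(1-\alpha)$ via the explicit eigenfunction $\cos^{1-\alpha}\theta_1$, and finally $(N-1)(1-\alpha) > N+\alpha-1$ \emph{because} $\alpha<0$. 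You instead produce the radial Neumann eigenfunction explicitly, $F(\theta_1)=\cos^2\theta_1-(\alpha+1)/(N+\alpha)$, verify by a one-line Beta-function (or integration-by-parts) identity that it is mean-zero, and get the exact value $\mu_0 = 2(N+\alpha)$, which dominates $N+\alpha-1$ for every $\alpha>-1$. The two outcomes are consistent, since $2(N+\alpha)-(N-1)(1-\alpha)=(N+1)(1+\alpha)>0$. What your version buys is an exact formula for $\mu_0$ rather than a lower bound, and a conclusion that is uniform in $\alpha>-1$ rather than restricted to $\alpha\in(-1,0)$; what the paper's version buys is that it avoids having to guess a closed-form eigenfunction for the constrained radial problem. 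Two small points worth recording if you were to write this up: your test function $\zeta_{N-1}$ and the decomposition implicitly take $N\geq 3$, and the case $N=2$ needs a separate (elementary) word since $\mathbb{S}^0$ degenerates; and the monotonicity step $\inf R_j\geq\inf R_1$ for $j\geq 2$, while standard, should cite that $\lambda_j=j(j+N-3)$ is increasing in $j\geq 1$.
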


\begin{proof}
We start by using standard separation of variables. Hence let 
\begin{equation*}
\psi =g\left( \theta _{1}\right) f(\theta _{2},...,\theta _{N-1}):\mathbb{S}
_{+}^{N-1}\rightarrow \mathbb{R}
\end{equation*}
be an eigenfunction of problem (\ref{EP_N}) corresponding to an eigenvalue $
\mu $. A straightforward computation yields 
\begin{equation*}
-\frac{1}{g}\frac{1}{\sin ^{N-2}\theta _{1}\cos ^{\alpha }\theta _{1}}\frac{d
}{d\theta _{1}}\left( \sin ^{N-2}\theta _{1}\cos ^{\alpha }\theta _{1}\frac{
dg}{d\theta _{1}}\right) +\frac{1}{g}\frac{1}{\sin ^{2}\theta _{1}}\frac{
\Delta _{\mathbb{S}^{N-2}}f}{f}=\mu .
\end{equation*}
Since, see \cite{Chavel} and \cite{Muller}, 
\begin{equation*}
\frac{\Delta _{\mathbb{S}^{N-2}}f}{f}=\text{Constant \ }\Leftrightarrow \text{\ \ }
\frac{\Delta _{\mathbb{S}^{N-2}}f}{f}=k\left( k+N-3\right) ,\text{ with }k
\in \mathbb{N} \cup
\left\{ 0\right\} ,
\end{equation*}
we have 
\begin{equation}
-\frac{1}{\sin ^{N-2}\theta _{1}\cos ^{\alpha }\theta _{1}}\frac{d}{d\theta
_{1}}\left( \sin ^{N-2}\theta _{1}\cos ^{\alpha }\theta _{1}\frac{d}{d\theta
_{1}}g\right) +\frac{k\left( k+N-3\right) }{\sin ^{2}\theta _{1}}g=g\mu .
\label{SL}
\end{equation}
Let us denote with $ \left\{ \mu _{k} \right\} _{k\in \mathbb{N}_{0}} $  the sequence of eigenvalues of the
Sturm-Liouville problem (\ref{SL}).

\noindent We claim that 
\begin{equation}
\mu _{0}>(N-1)(1-\alpha ).  \label{Claim_mu_0}
\end{equation}
Clearly the first  ``radial" eigenfunction, $g_{0}(\theta _{1})$, of (\ref
{EP_N}) corresponds to $k=0$. Since $g_{0}(\theta _{1})$ has exactly two
nodal domains there exists $\widehat{\theta }\in \left( 0,\frac{\pi }{2}
\right) $ such that 
\begin{equation*}
g_{0}(\theta _{1})=0\text{ if and only if }\theta =\widehat{\theta }.
\end{equation*}
Therefore 
\begin{equation*}
\mu _{0}=\lambda _{1}(\widehat{\theta }),
\end{equation*}
where $\lambda _{1}\left( \widetilde{\theta }\right) $ is the first
eigenvalue of the following Dirichlet problem 
\begin{equation}
\left\{ 
\begin{array}{ccc}
-\Delta _{\mathbb{S}^{N-1}}^{\alpha }v=\lambda \cos ^{\alpha }\theta _{1}v & 
\text{on} & \mathbb{S}_{+}^{N-1}\cap \left\{ 0<\theta _{1}<\widetilde{\theta 
}\right\}  \\ 
&  &  \\ 
v=0 & \text{on} & \partial \left[ \mathbb{S}_{+}^{N-1}\cap \left\{ 0<\theta
_{1}<\widetilde{\theta }\right\} \right] .
\end{array}
\right.   \label{EP_D}
\end{equation}
Since, as well known, the Dirichlet eigenvalues are monotone with respect to
the inclusion of sets, we have 
\begin{equation*}
\lambda _{1}\left( \widehat{\theta }\right) >\lambda _{1}\left( \frac{\pi }{2
}\right) .
\end{equation*}
Let us conclude the proof of the claim by showing that 
\begin{equation*}
\lambda _{1}\left( \frac{\pi }{2}\right) =(N-1)(1-\alpha ).
\end{equation*}
A straightforward computation shows that 
\begin{equation*}
\psi _{0}(\theta _{1}):=\cos ^{1-\alpha }\theta _{1}
\end{equation*}
is an eigenfunction of problem (\ref{EP_D}) with $\widetilde{\theta }=\frac{
\pi }{2},$ corresponding to the eigenvalue $(N-1)(1-\alpha )$. Indeed we
have 
\begin{equation*}
\left. -\frac{1}{\sin ^{N-2}\theta _{1}\cos ^{\alpha }\theta _{1}}\frac{d}{
d\theta _{1}}\left( \sin ^{N-2}\theta _{1}\cos ^{\alpha }\theta _{1}\frac{d}{
d\theta _{1}}g\right) +\frac{k\left( k+N-3\right) }{\sin ^{2}\theta _{1}}
g\right\vert _{k=0,\text{ \ }g=\cos ^{1-\alpha }\theta _{1}}=
\end{equation*}
\begin{equation*}
-\frac{1}{\sin ^{N-2}\theta _{1}\cos ^{\alpha }\theta _{1}}\frac{d}{d\theta
_{1}}\left( \sin ^{N-2}\theta _{1}\cos ^{\alpha }\theta _{1}\frac{d\psi _{0}
}{d\theta _{1}}\right) =\frac{\left( 1-\alpha \right) }{\sin ^{N-2}\theta
_{1}\cos ^{\alpha }\theta _{1}}\frac{d}{d\theta _{1}}\left( \sin
^{N-1}\theta _{1}\right) 
\end{equation*}
\begin{equation*}
=\left( 1-\alpha \right) \left( N-1\right) \frac{\sin ^{N-2}\theta _{1}\cos
\theta _{1}}{\sin ^{N-2}\theta _{1}\cos ^{\alpha }\theta _{1}}=\left(
1-\alpha \right) \left( N-1\right) \cos ^{1-\alpha }\theta _{1}.
\end{equation*}
Since $\psi _{0}(\theta _{1})$ does not change sign on $\mathbb{S}
_{+}^{N-1}\cap \left\{ 0<\theta _{1}<\frac{\pi }{2}\right\} $, it must be an
eigenfunction corresponding to \ $\lambda _{1}\left( \frac{\pi }{2}\right) $
, and the claim follows.

\noindent Now let us turn our attention to the case $k=1$, which corresponds
to the first ``angular" eigenfunction. That is an eigenfunction $\varphi $ of
problem (\ref{EP_N}) in the form 
\begin{equation*}
\varphi =g_{1}(\theta _{1})f(\theta _{2},...,\theta _{N-1})
\end{equation*}
where 
\begin{equation*}
g_{1}(\theta _{1})>0
\quad 
\forall \theta \in \left( 0,\frac{\pi }{2}
\right) .
\end{equation*}
Note that, since any eigenvalue of the problem (\ref{SL}) is simple, the
function $g_{1}(\theta _{1})$ is unique, up to a multiplicative constant.

\noindent We claim that 
\begin{equation*}
g_{1}(\theta _{1})=\sin \theta _{1}.
\end{equation*}
Indeed we have 
\begin{equation*}
-\frac{1}{\sin ^{N-2}\theta _{1}\cos ^{\alpha }\theta _{1}}\frac{d}{d\theta
_{1}}\left( \sin ^{N-2}\theta _{1}\cos ^{\alpha }\theta _{1}\frac{dg_{1}}{
d\theta _{1}}\right) +\frac{N-2}{\sin ^{2}\theta _{1}}g_{1}
\end{equation*}
\begin{equation*}
=-\frac{1}{\sin ^{N-2}\theta _{1}\cos ^{\alpha }\theta _{1}}\frac{d}{d\theta
_{1}}\left( \sin ^{N-2}\theta _{1}\cos ^{\alpha +1}\theta _{1}\right) +\frac{
N-2}{\sin \theta _{1}}
\end{equation*}
\begin{equation*}
=-\frac{1}{\sin ^{N-2}\theta _{1}\cos ^{\alpha }\theta _{1}}\left( \left(
N-2\right) \sin ^{N-3}\theta _{1}\cos ^{\alpha +2}\theta _{1}-\left( \alpha
+1\right) \sin ^{N-1}\theta _{1}\cos ^{\alpha }\theta _{1}\right) +\frac{N-2
}{\sin \theta _{1}}
\end{equation*}
\begin{equation*}
=-\left( N-2\right) \frac{\cos ^{2}\theta _{1}}{\sin \theta _{1}}+\left(
\alpha +1\right) \sin \theta _{1}+\frac{N-2}{\sin \theta _{1}}
\end{equation*}
\begin{equation*}
=\left(
N+\alpha -1\right) \sin \theta _{1}
=\left( N+\alpha -1\right) g_{1}(\theta
_{1}).
\end{equation*}
The claim is proved. 

Gathering the above estimates, taking into account that $\alpha \in \left(
-1,0\right) $, we have 
\begin{equation*}
\mu _{0}=\lambda _{1}(\widehat{\theta })>\lambda _{1}\left( \frac{\pi }{2}
\right) =(N-1)(1-\alpha )=-N\alpha +N+\alpha -1>N+\alpha -1=\mu _{1}.
\end{equation*}
\end{proof}

\begin{remark}
By equality (4.11) of \cite{ABCMP}, we have just proven that, the second
variation of the perimeter w.r.t. volume-preserving smooth perturbations at
the half ball is nonnegative for $\alpha \in (-1 , +\infty )$. Note that in 
\cite{BCM}, see Proposition 2.1, the case of nonnegative $\alpha $ is
addressed. 
\end{remark}

\section{An isoperimetric problem in the half space and a curious example 
}
In this section we consider an isoperimetric problem that we have studied in \cite{ABCMP}, but we will change the range of one of the parameters in it. 
\\
\noindent Let $k$, $\ell$ and $\alpha $ be real
numbers satisfying 
\begin{eqnarray}
\label{alphaass}
 & & \alpha >-1,
\\
\label{lass}
 & & \ell+N+ \alpha >0,
\\
\label{kass}
 & & k+N+\alpha >0.
\end{eqnarray}
We define a measure $\mu _{\ell, \alpha}$ on $\mathbb{R}^{N}_+ $  by 
\begin{equation}
d\mu _{\ell, \alpha}(x)=|x|^{\ell} x_N^\alpha\,dx.  
\label{dmu}
\end{equation}
If $M \subset $ ${\mathbb R}^{N}_{+}$ is a  measurable set with finite 
$\mu _{\ell, \alpha} $-measure, then we define $M^{\star }$, the 
\\
$\mu_{\ell,\alpha }$-symmetrization of $M$,
 as 
\begin{equation}
\label{musymm}
M^{\star } := B_R ^+ ,
\end{equation}
where $R$ is given by 
\begin{equation}
\mu_{\ell, \alpha} 
\left( B_{R} ^+  \right) =
\mu _{\ell, \alpha} \left( M\right) = \int_M d\mu _{\ell, \alpha} (x) .  
\label{mu_(M)}
\end{equation}
Following \cite{MP}, the {\sl $\mu _{k, \alpha}$--perimeter relative to $\mathbb{R}^N _+$ \/} of a measurable set $M $ of locally finite perimeter - henceforth simply called the {\sl relative  $\mu _{k, \alpha}$--perimeter} -  is given by 
\begin{equation}
\label{perdef}
P_{\mu _{k, \alpha}}(M , \mathbb{R}^N _+ ):= \int_{\partial M\cap \mathbb{R} ^N_+ } \, x_N^\alpha |x|^{k}
 \, \mathcal{H}_{N-1} (dx) .
\end{equation}
Here and throughout, $\partial M$ and ${\mathcal H} _{N-1} $ will denote the essential boundary of $M$ and  $(N-1)$-dimensional Hausdorff-measure, respectively. 

We will call a set $\Omega \subset \mathbb{R}^N _+ $ a $C^n$-set,  ($n\in \mathbb{N} $), 
if for every $x^0 \in \partial \Omega \cap \mathbb{R}^N _+ $, 
there is a number $r >0$ such that 
$B_r (x^0 ) \cap \Omega $
has exactly one connected component and $B_r (x^0 ) \cap \partial \Omega $ 
is the graph of a $C^n$-function on an open set in $\mathbb{R} ^{N-1} $.  

We consider a one-parameter family $\{ \varphi _t \}_{t} $ of $C^n $-variations
$$
\mathbb{R}^N _+  \times (-\varepsilon , \varepsilon ) \ni (x,t) \longmapsto \varphi (x,t) \equiv \varphi _t (x) \in \mathbb{R}^N _+ ,
$$ 
with $\varphi (x, 0)=x$, for any $x\in \mathbb{R}^N_+ $. 
The measure and perimeter functions
of the variation are $m(t) := \mu _{\ell ,\alpha }(\varphi _t (\Omega ))$  and $p (t) := P_{\mu _{k,\alpha }} (\varphi _t (\Omega ))$, respectively. We say that the variation $\{ \varphi _t  \} _t$ of $\Omega $ is {\sl measure-preserving} if $m(t)$ is constant for any small $t$. We
say that a $C^1$-set $\Omega $ is {\sl stationary} if $p'(0) = 0$ for any measure-preserving $C^1 $-variation.
Finally, we call a $C^2$-set $\Omega$ {\sl stable} if it is stationary and $p''(0) \geq  0$ for any
measure-preserving $C^2 $-variation of $\Omega $.

If $M $ is any measurable subset of $\mathbb R^{N}_{+}$, with $0<\mu _{\ell,\alpha} (M)<+\infty $, we set
\begin{equation}
\label{rayl1}
{\mathcal R}_{k,\ell,N, \alpha} (M) := 
\frac {P_{ \mu_{k , \alpha}}  (M) }
{  \left( \mu_{\ell,\alpha} (M) \right)^{(k+N+\alpha-1)/(\ell+N+\alpha)} }. 
\end{equation}
Finally, we define 
\begin{equation}
\label{isopco}
C_{k,\ell,N, \alpha}^{rad} := {\mathcal R}_{k,\ell,N, \alpha}
(B_1 ^+ ).
\end{equation}  
We study the following isoperimetric problem: 
\\[0.1cm]
{\sl Find the constant $C_{k,\ell,N, \alpha} \in [0, + \infty )$, such that}
\begin{eqnarray}
\label{isopproblem}
 & & C _{k,\ell,N, \alpha}  :=  
\inf \{ {\mathcal R}_{k,\ell,N, \alpha} (M):\, 
\mbox{{\sl $M$ is a measurable set with locally finite perimeter}}  
\\
\nonumber
 & & \qquad \quad \quad \ \mbox{ {\sl and $0<\mu _{\ell,\alpha} (M) <+\infty $}} \} . 
\end{eqnarray}   
Moreover, we are interested in conditions on $k$, $\ell$ and $\alpha$ such that
\begin{equation}
\label{isoradial}
{\mathcal R}_{k,\ell,N, \alpha} (M) \geq {\mathcal R}_{k,\ell,N, \alpha} (M^{ \star} ) 
\end{equation}
holds for all measurable sets $M\subset {\mathbb{R} ^N_+ }$ with  $ 0<\mu _{\ell,\alpha}(M)<+\infty $ and locally finite perimeter. 
\\[0.1cm]
Let us begin with some immediate observations.
\\ 
The conditions (\ref{alphaass}), (\ref{kass}) and (\ref{lass}) have been made to ensure that the integrals (\ref{mu_(M)}) and (\ref{perdef}) converge. The cases $\alpha =0$ and $\alpha >0$ were analysed in the articles \cite{ABCMP2017} and \cite{ABCMP}, respectively. Here we are only interested in the case 
$$
\alpha \in (-1, 0), 
$$
that is, 
our weight functions are {\sl singular} on the hyperplane $\{ x_N =0\} $. Hence our definition (\ref{perdef}) gives a {\sl relative } perimeter: boundary parts contained in the hyperplane $H $ do not count. 
\\ 
The functional ${\mathcal R}_{k,\ell,N, \alpha } $  
has the following homogeneity properties, 
\begin{eqnarray}
\label{hom1}
 {\mathcal R}_{k,\ell,N, \alpha } (M ) & = & {\mathcal R}_{k,\ell,N, \alpha } (tM ) ,
\end{eqnarray}
where  $t>0$, $M $ is a measurable set with $0<\mu_{\ell, \alpha} (M)<+\infty $ and
$tM := \{t \zeta:\, \zeta\in M \} $, and there holds   
\begin{equation}
\label{isopconst2}
C_{k,\ell,N, \alpha} ^{rad} = {\mathcal R}_{k,\ell,N, \alpha} (B_1 ^+  ).
\end{equation}  
Hence we have that 
\begin{equation}
\label{relCC}
C_{k,\ell,N, \alpha} \leq C_{k,\ell,N, \alpha} ^{rad} ,
\end{equation}  
and (\ref{isoradial}) holds if and only if 
\begin{equation}
\label{CCrad}
C_{k,\ell,N,\alpha } = C_{k,\ell,N,\alpha } ^{rad} .
\end{equation}
We have the following
\begin{lemma}
\label{3.1}
Let $ \alpha \in (-1,0)$. 
Then a necessary condition
for the existence of minimizers of problem {\bf (P)} is
\begin{equation}
\label{nec1}
kN \geq \ell(N-1) -\alpha .
\end{equation}
\end{lemma}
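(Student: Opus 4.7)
The plan is to prove the contrapositive: assuming $kN<\ell(N-1)-\alpha$, I will show that the infimum $C_{k,\ell,N,\alpha}$ is zero while every admissible set has strictly positive isoperimetric quotient, so problem \textbf{(P)} cannot have a minimizer. The test family will be a sequence of unit balls translated far up along the $x_N$-axis, whose quotient tends to $0$.

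Set $\Omega_t:=B_1(0,\ldots,0,t)$ for $t>1$. Each $\Omega_t$ lies inside $\mathbb R^N_+$ with $\mathrm{dist}(\Omega_t,H)\geq t-1$, so the singular weight $x_N^\alpha$ is bounded on $\overline{\Omega_t}$ and no integrability issue arises on $\partial\Omega_t$. Since $|x|=t(1+O(1/t))$ and $x_N=t(1+O(1/t))$ uniformly on $\overline{\Omega_t}$ as $t\to\infty$, a direct estimate yields
\begin{equation*}
\mu_{\ell,\alpha}(\Omega_t)=\omega_N\,t^{\ell+\alpha}\bigl(1+o(1)\bigr),\qquad P_{\mu_{k,\alpha}}(\Omega_t)=N\omega_N\,t^{k+\alpha}\bigl(1+o(1)\bigr),
\end{equation*}
with $\omega_N:=|B_1|$. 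Substituting these asymptotics into \eqref{rayl1} and simplifying the exponent of $t$ gives
\begin{equation*}
{\mathcal R}_{k,\ell,N,\alpha}(\Omega_t)=c\,t^{[kN-\ell(N-1)+\alpha]/(\ell+N+\alpha)}\bigl(1+o(1)\bigr)
\end{equation*}
for some positive constant $c=c(N)$. By \eqref{lass} the denominator is positive, while by the negation of \eqref{nec1} the numerator is strictly negative; hence ${\mathcal R}_{k,\ell,N,\alpha}(\Omega_t)\to 0$ as $t\to\infty$, forcing $C_{k,\ell,N,\alpha}=0$.

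To finish, I would observe that every admissible $M$ satisfies $P_{\mu_{k,\alpha}}(M)>0$: under \eqref{lass} the total mass $\mu_{\ell,\alpha}(\mathbb R^N_+)$ is infinite, so an admissible set can be neither essentially empty nor essentially equal to $\mathbb R^N_+$; hence $\partial M\cap\mathbb R^N_+$ has positive $\mathcal H_{N-1}$-measure, on which the density $|x|^k x_N^\alpha$ is strictly positive. Therefore ${\mathcal R}_{k,\ell,N,\alpha}(M)>0$ for every admissible $M$, while the infimum is $0$, so the infimum is not attained and \textbf{(P)} has no solution whenever \eqref{nec1} fails. The only mildly technical step is the algebraic simplification of the exponent of $t$; the rest is routine asymptotic bookkeeping together with standard facts about sets of finite perimeter.
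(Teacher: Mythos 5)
Your proposal is correct and takes essentially the same approach as the paper: the paper also tests with $\Omega(t)=B_1(0,\dots,0,t)$, observes ${\mathcal R}_{k,\ell,N,\alpha}(\Omega(t))\simeq t^{\alpha+k-(k+N+\alpha-1)(\alpha+\ell)/(\ell+N+\alpha)}$ (whose exponent simplifies to exactly your $[kN-\ell(N-1)+\alpha]/(\ell+N+\alpha)$), and concludes that the quotient tends to $0$ when \eqref{nec1} fails. Your closing remark that every admissible set has strictly positive quotient, so the zero infimum cannot be attained, is a point the paper leaves implicit but is a harmless and correct addendum.
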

\begin{proof}
In the following we write for any two continuous functions $f,g:  
(0, +\infty ) \to (0,+\infty )$,
$$
f \simeq g \quad \Longleftrightarrow \quad c_1 f(t) \leq g(t) \leq c_2 g(t) \ \ \forall t\in [1, +\infty ),
$$
for some constants $0<c_1 <c_2 $.
\\
Assume that (\ref{nec1}) does not hold. 
Let $\Omega (t) := B_1 (0, \ldots , 0 , t) $, ($t>1$). 
Then we have 
$$
\mathcal{R}_{k,\ell,N,\alpha } (\Omega (t) ) \simeq t^{\alpha +k - (k+N+\alpha -1)(\alpha +\ell)/(\ell+N+\alpha )}  
.$$
Since $
KN < \ell(N-1) -\alpha $, it follows that
$$
\lim_{t\to \infty } \mathcal{R}_{k,\ell,N,\alpha } (\Omega (t) ) =0,
$$
that is, problem {\bf (P)} has no minimizer.
\end{proof}

\begin{remark}
{\bf (a)} Observe that 
(\ref{nec1}) is equivalent to
\begin{equation}
\label{nec1-1}
N(k+ N+\alpha -1 ) \geq (N-1) (\ell+N+\alpha ).
\end{equation}
Note also that (\ref{nec1}) is not satisfied if
$$
k=\ell=0,
$$
that is, problem {\bf (P)} has no minimizer in this case.
\\
{\bf (b)} Using trial domains   
$$
\Omega (t) = B_1 (t, 0,\ldots ,0), 
$$
and proceeding 
similarly as in the above proof, leads to another necessary condition for existence of minimizers of {\bf (P)}, namely:
\begin{equation}
\label{nec2}
k(N+ \alpha )\geq \ell(N+\alpha -1).
\end{equation}
This necessary condition has been obtained in the case $\alpha \geq 0$ in \cite{ABCMP}, Lemma 4.1. 
Note that in our case, $\alpha \in (-1,0)$, it holds true, too. However, if $\alpha \in (-1,0)$, then (\ref{nec1}) is more restrictive than (\ref{nec2}).  
\end{remark}

\begin{lemma}
\label{Lemma_nec3}

A necessary condition for radiality of the minimizers of problem {\bf (P)} is
\begin{equation}
\label{nec3}
\ell+1 \leq k +\frac{N+\alpha -1}{k+N+\alpha -1 } .
\end{equation}
Moreover, if (\ref{nec3}) is satisfied, then half-balls $B_R^+  $, ($R>0$), are stable for problem {\bf (P)}.  

\end{lemma}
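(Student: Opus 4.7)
The plan is to deduce both assertions from the stability criterion of [ABCMP, Thm.~4.1] combined with the eigenvalue computation of Theorem \ref{mu1}. As noted in the introduction, stability of $B_R^+$ for problem {\bf (P)} is equivalent to a weighted Poincar\'{e}-Wirtinger inequality on $\mathbb{S}_+^{N-1}$; by the scale invariance (\ref{hom1}) of $\mathcal{R}_{k,\ell,N,\alpha}$, stability is independent of $R$, so it suffices to work with $B_1^+$.

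To extract the exact form of the relevant Wirtinger inequality, I would parametrize a measure-preserving $C^2$-variation of $B_1^+$ in polar form, $r=\rho(\zeta,t)=1+tu(\zeta)+\frac{t^2}{2}v(\zeta)+O(t^3)$ with $\zeta\in\mathbb{S}_+^{N-1}$, for which
\begin{equation*}
m(t)=\frac{1}{\ell+N+\alpha}\int_{\mathbb{S}_+^{N-1}}\rho^{\ell+N+\alpha}\,d\sigma_\alpha,\qquad p(t)=\int_{\mathbb{S}_+^{N-1}}\rho^{k+N+\alpha-1}\sqrt{1+\rho^{-2}|\nabla_{\mathbb{S}}\rho|^2}\,d\sigma_\alpha.
\end{equation*}
Expanding to order $t^2$, the constraint $m(t)\equiv m(0)$ yields, at first order, $\int_{\mathbb{S}_+^{N-1}} u\,d\sigma_\alpha=0$ (so $u\in V_\alpha$), and, at second order, a relation that eliminates $\int v\,d\sigma_\alpha$ in favor of $\int u^2\,d\sigma_\alpha$. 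Substituting back into the Taylor expansion of $p(t)$ gives, after a routine calculation,
\begin{equation*}
p''(0)=\int_{\mathbb{S}_+^{N-1}}|\nabla_{\mathbb{S}} u|^2\,d\sigma_\alpha-(k+N+\alpha-1)(\ell+1-k)\int_{\mathbb{S}_+^{N-1}} u^2\,d\sigma_\alpha,
\end{equation*}
which matches formula (4.11) of [ABCMP]; the derivation does not require $\alpha\geq 0$, as observed in the remark following Theorem \ref{mu1}.

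Consequently, $B_1^+$ is stable if and only if
\begin{equation*}
(k+N+\alpha-1)(\ell+1-k)\int_{\mathbb{S}_+^{N-1}} u^2\,d\sigma_\alpha \leq \int_{\mathbb{S}_+^{N-1}}|\nabla_{\mathbb{S}} u|^2\,d\sigma_\alpha\qquad \forall\,u\in V_\alpha,
\end{equation*}
whose sharp constant is $\mu_1^\alpha(\mathbb{S}_+^{N-1})=N+\alpha-1$ by Theorem \ref{mu1}. Dividing by $k+N+\alpha-1>0$ (see (\ref{kass})), the inequality $(k+N+\alpha-1)(\ell+1-k)\leq N+\alpha-1$ is precisely (\ref{nec3}); this yields the ``Moreover'' part. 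For the necessity, every minimizer of {\bf (P)} has nonnegative second variation along measure-preserving $C^2$-variations and is thus stable, so if a half-ball realizes the infimum, the stability criterion, equivalently (\ref{nec3}), must hold. The only nontrivial step would be the second-variation computation, but it is already carried out in [ABCMP]; the genuinely new ingredient is the sharp Wirtinger constant furnished by Theorem \ref{mu1}.
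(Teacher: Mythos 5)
Your proof is correct and follows the paper's approach exactly: the paper's own proof simply invokes \cite{ABCMP}, Theorem 4.1, together with the new eigenvalue computation of Theorem \ref{mu1}, noting that the second-variation criterion from \cite{ABCMP} carries over once one knows $\mu_1^\alpha(\mathbb{S}_+^{N-1})=N+\alpha-1$ for $\alpha\in(-1,0)$. You have merely written out the second-variation computation (formula (4.11) of \cite{ABCMP}) that the paper delegates to that citation, arriving at the same reduction and the same algebra.
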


\begin{proof}
This property  has been obtained for the case $\alpha \geq 0$ in \cite{ABCMP}, Theorem 4.1. The proof essentially depends on the fact that the first eigenvalue of the problem (\ref{EP_N}),  $\mu _1 ^{\alpha }( \mathbb{S} ^{N-1} _+ )$ is equal to $N+\alpha -1$. As we have proven above in Theorem \ref{mu1}, that property still holds for $\alpha \in (-1,0)$. Hence the proof of \cite{ABCMP} carries over to our case.
\end{proof}

Now we are the position to prove our main result.

\vspace{0.2cm}
{\bf Proof of Theorem \ref{maintheorem}: } Non-existence follows from Lemma \ref{3.1}, while the fact that half-balls are stable  for problem {\bf (P)} follows from Lemma \ref{Lemma_nec3} - see also \cite{ABCMP}, Theorem 4.1 and Theorem 5.2 for the special case $N=2$, $k=\ell =0$.
$\hfill \Box $

\begin{remark}
Observe that for each  $\alpha \in (-1,0)$, the set of pairs $(k,\ell)$ satisfying the conditions (\ref{nonex}) and (\ref{nec3}) is non-empty in view of (\ref{condk}). In particular, it contains the point $(0,0)$. 
\end{remark}

We conclude with a result that has been obtained for the cases $\alpha =0$ and $\alpha >0$ in the papers \cite{ABCMP2017} and \cite{ABCMP}, respectively. 

\begin{theorem}
Let $k\geq \ell + 1 $ and $\alpha \in (-1,0)$. Then (\ref{CCrad}) holds. Moreover, if $k> \ell + 1 $ and
\begin{equation}
\label{R=Crad}
{\mathcal R} _{k,\ell,N,\alpha } (M) = C^{rad} _{k,\ell , N,\alpha } 
\quad \mbox{for some measurable set $M \subset \mathbb{R}^N_+ $
with $0 < \mu _{\ell, \alpha } (M) < +\infty $,}
\end{equation} 
then $M = B_R ^+ $
for some $R > 0$.
\end{theorem}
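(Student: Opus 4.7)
The plan is to reduce to smooth sets star-shaped with respect to the origin and then combine the pointwise bound $\sqrt{R^2+|\nabla_{\mathbb{S}}R|^2}\geq R$ with Jensen's inequality. Write $s:=\ell+N+\alpha$ and $t:=k+N+\alpha-1$; the assumptions (\ref{alphaass})--(\ref{kass}) give $s,t>0$, and the hypothesis $k\geq\ell+1$ is exactly $t\geq s$.

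For a smooth set of the form $M=\{r\zeta:0<r<R(\zeta),\ \zeta\in\mathbb{S}_+^{N-1}\}$, passing to spherical coordinates and using $x_N^\alpha=r^\alpha\zeta_N^\alpha$ and $dx=r^{N-1}dr\, d\sigma$ yields
\begin{equation*}
\mu_{\ell,\alpha}(M)=\frac{1}{s}\int_{\mathbb{S}_+^{N-1}}R(\zeta)^s\, d\sigma_\alpha(\zeta),
\end{equation*}
while the standard area formula for the radial graph $\zeta\mapsto R(\zeta)\zeta$ gives
\begin{equation*}
P_{\mu_{k,\alpha}}(M,\mathbb{R}_+^N)=\int_{\mathbb{S}_+^{N-1}}R^{k+N+\alpha-2}\sqrt{R^2+|\nabla_{\mathbb{S}}R|^2}\, d\sigma_\alpha \ \geq\ \int_{\mathbb{S}_+^{N-1}} R^t\, d\sigma_\alpha,
\end{equation*}
with equality if and only if $R$ is constant.

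Since $t/s\geq 1$, the map $u\mapsto u^{t/s}$ is convex on $[0,+\infty)$, so Jensen's inequality applied to the probability measure $d\sigma_\alpha/\sigma_\alpha(\mathbb{S}_+^{N-1})$ and the function $R^s$ gives
\begin{equation*}
\left(\frac{1}{\sigma_\alpha(\mathbb{S}_+^{N-1})}\int_{\mathbb{S}_+^{N-1}} R^s\, d\sigma_\alpha\right)^{\!t/s}\ \leq\ \frac{1}{\sigma_\alpha(\mathbb{S}_+^{N-1})}\int_{\mathbb{S}_+^{N-1}} R^t\, d\sigma_\alpha.
\end{equation*}
Combining this with the two displays above and rearranging produces $\mathcal{R}_{k,\ell,N,\alpha}(M)\geq s^{t/s}\sigma_\alpha(\mathbb{S}_+^{N-1})^{1-t/s}=\mathcal{R}_{k,\ell,N,\alpha}(B_1^+)=C^{rad}_{k,\ell,N,\alpha}$, which is the desired inequality. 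When $k>\ell+1$, i.e.\ $t>s$, strict convexity of $u\mapsto u^{t/s}$ together with equality in the perimeter estimate forces $R$ to be a positive constant $\sigma_\alpha$-a.e., so $M=B_R^+$ for some $R>0$.

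For an arbitrary measurable set $M$ of locally finite perimeter with $0<\mu_{\ell,\alpha}(M)<+\infty$, the reduction to the star-shaped case proceeds via a \emph{radial rearrangement}: for each $\zeta\in\mathbb{S}_+^{N-1}$, replace the one-dimensional slice $\{r>0:r\zeta\in M\}$ by the initial segment $(0,R(\zeta))$ having the same weighted length $\int r^{s-1}\, dr$. Fubini in spherical coordinates preserves $\mu_{\ell,\alpha}$, and a coarea-slicing argument combined with a one-dimensional weighted rearrangement inequality shows that $P_{\mu_{k,\alpha}}$ does not increase; smooth approximation then brings us into the star-shaped setting above. The main technical obstacle is precisely this rearrangement step, which requires careful handling of the weights and of the boundary portions near $\{x_N=0\}$ where $\zeta_N^\alpha$ is singular; the star-shaped calculation itself is essentially a two-line application of Jensen. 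The argument follows the scheme used in \cite{ABCMP2017} and \cite{ABCMP} for $\alpha=0$ and $\alpha>0$ respectively, and it carries over thanks to the fact that $d\sigma_\alpha$ is still a finite Borel measure on $\mathbb{S}_+^{N-1}$ for $\alpha\in(-1,0)$.
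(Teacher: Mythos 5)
Your proposal takes a genuinely different route from the paper's. The paper reflects $\Omega$ across the hyperplane $H$, applies Gauss's divergence theorem to the vector field $|x|^{\ell}x_N^{\alpha}x$ on $\widetilde\Omega$, and from $\mathrm{div}(|x|^{\ell}x_N^{\alpha}x)=(\ell+N+\alpha)|x|^{\ell}x_N^{\alpha}$ together with $x\cdot\nu\le|x|$ obtains the isoperimetric inequality directly in the borderline case $k=\ell+1$; the case $k>\ell+1$ is then handed off to the reduction Lemma 4.3. Your plan instead parametrizes star-shaped sets by a radial profile $R(\zeta)$ in polar coordinates, discards the tangential part of the area element via $\sqrt{R^2+|\nabla_{\mathbb{S}}R|^2}\ge R$, and applies Jensen with the convex map $u\mapsto u^{t/s}$, $t/s=(k+N+\alpha-1)/(\ell+N+\alpha)\ge 1$. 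The star-shaped computation is correct and, pleasantly, handles $k\ge\ell+1$ in one stroke without needing a separate reduction lemma; the equality case for $t>s$ also falls out of strict convexity.

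However, there is a genuine gap in the reduction to star-shaped sets. You claim that under the radial rearrangement (replacing each slice $\{r>0:r\zeta\in M\}$ by $(0,R(\zeta))$ with the same weighted length $\int r^{s-1}\,dr$) the weighted perimeter $P_{\mu_{k,\alpha}}$ does not increase. That is false. The rearranged profile $R(\zeta)$ can have enormous angular gradient even when $M$ has small perimeter: take, say, $N=2$, $k=\ell=\alpha=0$, and $M=B_\varepsilon(p)$ with $|p|=1$. Then $P(M)=2\pi\varepsilon$, but $R(\theta)\sim (\varepsilon^2-\theta^2)^{1/4}$ near $\theta=0$ and the perimeter of the star-shaped set $M^{\star}$ is of order $\sqrt{\varepsilon}\gg\varepsilon$. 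So $P(M^\star)>P(M)$, and the asserted monotonicity of perimeter under radial rearrangement cannot be used.

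The argument is salvageable, though, because you do not actually need $P_{\mu_{k,\alpha}}(M^\star)\le P_{\mu_{k,\alpha}}(M)$: you only need the weaker bound $P_{\mu_{k,\alpha}}(M)\ge\int_{\mathbb{S}^{N-1}_+}R(\zeta)^t\,d\sigma_\alpha(\zeta)$, which bypasses $M^\star$ entirely. Indeed, dominating the full perimeter integrand by its radial component, $|x|^kx_N^\alpha\ge|x|^kx_N^\alpha\,|\nu\cdot\hat r|$, and projecting $\partial^*M$ radially onto $\mathbb{S}^{N-1}_+$ (the tangential Jacobian of $x\mapsto x/|x|$ being $|\nu\cdot\hat r|\,|x|^{1-N}$), one gets
\begin{equation*}
P_{\mu_{k,\alpha}}(M)\ \ge\ \int_{\mathbb{S}^{N-1}_+}\Bigl(\sum_{r\in\partial^*M_\zeta}r^{\,t}\Bigr)\,d\sigma_\alpha(\zeta)\ \ge\ \int_{\mathbb{S}^{N-1}_+}\bigl(\sup M_\zeta\bigr)^{t}\,d\sigma_\alpha(\zeta)\ \ge\ \int_{\mathbb{S}^{N-1}_+}R(\zeta)^{t}\,d\sigma_\alpha(\zeta),
\end{equation*}
since $\int_{M_\zeta}r^{s-1}\,dr\le(\sup M_\zeta)^s/s$ forces $R(\zeta)\le\sup M_\zeta$. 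Feeding this into your Jensen step closes the argument. You should state the reduction in this form (and track equality through the projection and the $\sup$-step for the $k>\ell+1$ rigidity), rather than assert that radial rearrangement is perimeter-decreasing. By comparison, the paper's divergence-theorem route avoids these slicing subtleties altogether at the cost of invoking Lemma 4.3 to pass from $k=\ell+1$ to $k>\ell+1$.
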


For the proof we need a property that has been known for the cases $\alpha \geq 0$, see \cite{ABCMP}, Lemma 4.1. The proof carries over to our situation without changes. 

\begin{lemma}
Let $k,\ell $ and $\alpha $ be as above and $\ell ' \in (-N-\alpha , \ell )$.  
Further, 
assume that 
$C_{k,\ell, N,\alpha } = C_{k,\ell ,N, \alpha } ^{rad} $. Then we also
have $C_{k,\ell ', N,\alpha } = C_{k,\ell ',N, \alpha } ^{rad} $.
Moreover, if $ {\mathcal R}_{k,\ell ' ,N,\alpha }(M) = C_{k,\ell ' , N, \alpha } ^{rad} $
for some measurable set $M \subset \mathbb{R}^N_+$, with
$0< \mu _{\ell ', \alpha }(M) < +\infty $, then $M = B _R ^+ $
for some $R>0$.
\end{lemma}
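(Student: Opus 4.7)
My plan is to reduce the lemma to an elementary rearrangement comparison between the two volume measures, then invoke the standing hypothesis together with scale invariance. The key observation I will exploit is that, since $\ell' < \ell$, we have
\[
\mu_{\ell',\alpha}(M) \;=\; \int_M |x|^{\ell'-\ell}\,d\mu_{\ell,\alpha}(x),
\]
where $|x|^{\ell'-\ell}$ is a \emph{strictly} radially decreasing function on $\mathbb{R}^N_+$. A direct Cavalieri computation---using that the superlevel sets $\{|x|^{\ell'-\ell}>t\}\cap\mathbb{R}^N_+$ are precisely the half-balls $B_{r(t)}^+$ with $r(t)=t^{1/(\ell'-\ell)}$---will then show that, among all measurable $M\subset\mathbb{R}^N_+$ with a prescribed value of $\mu_{\ell,\alpha}(M)$, the half-ball $B_R^+$ centered at the origin with $\mu_{\ell,\alpha}(B_R^+)=\mu_{\ell,\alpha}(M)$ \emph{maximizes} $\mu_{\ell',\alpha}(M)$; the inequality is strict unless $M=B_R^+$ up to $\mu_{\ell,\alpha}$-null sets, since $\ell'-\ell<0$ strictly.

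With this rearrangement inequality in hand, I first treat the case $\mu_{\ell,\alpha}(M)<+\infty$. I pick $R>0$ with $\mu_{\ell,\alpha}(B_R^+)=\mu_{\ell,\alpha}(M)$; the standing hypothesis $C_{k,\ell,N,\alpha}=C^{rad}_{k,\ell,N,\alpha}$ then gives $P_{\mu_{k,\alpha}}(M)\geq P_{\mu_{k,\alpha}}(B_R^+)$, while the rearrangement inequality gives $\mu_{\ell',\alpha}(M)\leq\mu_{\ell',\alpha}(B_R^+)$. Using the scale invariance (\ref{hom1}) and the positivity of $\theta':=(k+N+\alpha-1)/(\ell'+N+\alpha)$ in the regime of interest, these combine to
\[
\mathcal{R}_{k,\ell',N,\alpha}(M) \;\geq\; \mathcal{R}_{k,\ell',N,\alpha}(B_R^+) \;=\; C^{rad}_{k,\ell',N,\alpha},
\]
and together with (\ref{relCC}) this yields $C_{k,\ell',N,\alpha}=C^{rad}_{k,\ell',N,\alpha}$. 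The rigidity statement is then immediate: equality in the isoperimetric ratio forces equality in the rearrangement step, and that step's strict part forces $M=B_R^+$.

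The one real subtlety I foresee is the case $\mu_{\ell,\alpha}(M)=+\infty$, which can genuinely arise for unbounded $M$ since $|x|^{\ell'}$ decays faster at infinity than $|x|^{\ell}$. I would handle it by truncation: apply the chain above to $M\cap B_r^+$ and send $r\to\infty$ along a sequence for which the spurious boundary contribution $\int_{M\cap\partial B_r}x_N^\alpha|x|^k\,d\mathcal H^{N-1}$ tends to zero. Such a sequence exists by a Fubini argument as long as $P_{\mu_{k,\alpha}}(M)<+\infty$---otherwise the desired inequality is trivial. This truncation step is the main technical point; it is presumably where the assertion ``the proof carries over without changes'' from \cite{ABCMP} does the bulk of the work.
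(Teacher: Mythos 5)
Your central mechanism is correct and is precisely the natural one for this family of results: write $\mu_{\ell',\alpha}(M)=\int_M |x|^{\ell'-\ell}\,d\mu_{\ell,\alpha}$, use the layer-cake formula with the nested superlevel half-balls of the strictly radially decreasing weight $|x|^{\ell'-\ell}$ to get $\mu_{\ell',\alpha}(M)\le\mu_{\ell',\alpha}(B_R^+)$ when $\mu_{\ell,\alpha}(B_R^+)=\mu_{\ell,\alpha}(M)$, and then stack this with $P_{\mu_{k,\alpha}}(M)\ge P_{\mu_{k,\alpha}}(B_R^+)$ from the standing $\ell$--hypothesis (the last requires $k+N+\alpha-1>0$ so that $r\mapsto P_{\mu_{k,\alpha}}(B_r^+)$ is increasing, which holds in the regime where the lemma is invoked). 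The rigidity part also goes through: strict monotonicity of $|x|^{\ell'-\ell}$ forces equality in the Cavalieri comparison only when $M$ and $B_R^+$ coincide up to a $\mu$--null set. So the finite--mass case is sound.

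The gap is exactly in the truncation step you flag. The spurious boundary term after cutting $M$ by $B_r^+$ is $\int_{M^{(1)}\cap\partial B_r}x_N^\alpha|x|^k\,d\mathcal H_{N-1}$, and by the coarea formula its integral in $r$ over $(0,\infty)$ equals $\mu_{k,\alpha}(M)=\int_M|x|^kx_N^\alpha\,dx$, \emph{not} $P_{\mu_{k,\alpha}}(M)$. These are different quantities, so Fubini produces a sequence $r_j\to\infty$ along which that term vanishes only if $\mu_{k,\alpha}(M)<+\infty$, which is not your stated hypothesis. Worse, in the setting where the lemma is actually used (Theorem 4.1, where $k\ge\ell+1>\ell$), the implication $\mu_{\ell,\alpha}(M)=+\infty\Rightarrow\mu_{k,\alpha}(M)=+\infty$ holds automatically (since $|x|^k\ge|x|^\ell$ for $|x|\ge1$), so the Fubini escape hatch is closed in exactly the case where you need it. To repair the argument you would either have to show the regime $\mu_{\ell,\alpha}(M)=+\infty$, $\mu_{\ell',\alpha}(M)<+\infty$, $P_{\mu_{k,\alpha}}(M)<+\infty$ is vacuous (plausible but not immediate; it would itself rest on the $\ell$--isoperimetric inequality applied to truncations and thus risks circularity), or to choose a different exhaustion for which the extra perimeter contribution is controllable by the quantities you do assume finite. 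Since the paper only points to the proof in the earlier reference without reproducing it, I cannot say whether that proof silently assumes $\mu_{\ell,\alpha}(M)<+\infty$ or genuinely disposes of this edge case, but your proposal as written does not.
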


{\sl Proof of Theorem 4.1:} We proceed similarly as in \cite{ABCMP}, proof of Theorem 4.1. The idea is to use Gauss' Divergence Theorem.  
We split into two cases.
\\
{\bf 1.} Assume that $k= \ell  + 1$, and let $\Omega $  a $C^1 $-set.  Define the domain
$$
\widetilde{\Omega} := \Omega \cup (H\cap \partial \Omega )
 \cup \{ x= (x_1 , \ldots , -x_N ) :\, x\in \Omega \} .
$$
Then we have in view of the assumptions (\ref{alphaass}), (\ref{kass}) and (\ref{lass}),
\begin{eqnarray}
\label{per1}
2 \int_{\partial \Omega \cap \mathbb{R}^N _+ } |x| ^{\ell} x _N ^{\alpha } (x \cdot \nu ) \, \mathcal{H}_{N-1} (dx ) & = &
 \int_{\partial \widetilde{\Omega }  } |x| ^{\ell} x _N ^{\alpha } (x \cdot \nu ) \, \mathcal{H}_{N-1} (dx ) , 
\\
\nonumber
 & & (\nu : \mbox{ exterior unit normal } ),
\\ 
\label{volume1}
2 \int_{\Omega } |x| ^{\ell} x _N ^{\alpha }  \, dx & = & 
 \int_{\widetilde{\Omega} } |x| ^{\ell} x _N ^{\alpha }  \, dx .
\end{eqnarray}
Furthermore, Gauss' Divergence Theorem yields
\begin{eqnarray}
\label{gauss}
\int_{\widetilde{\Omega } } 
|x|^{\ell} x_N ^{\alpha } \, dx
 & = &
 \frac{1}{\ell +N +\alpha } 
\int_{\widetilde{\Omega } } 
\mbox{div}\, \left( |x|^{\ell} x_N ^{\alpha } x \right) \, dx 
\\
\nonumber 
 & = &
\int_{\partial \widetilde{\Omega } } 
|x|^{\ell} x_N ^{\alpha } (x\cdot \nu ) {\mathcal H} _{N-1} (dx) 
\\
\nonumber
 & \leq & 
\int_{\partial \widetilde{\Omega } } 
|x|^{\ell +1 } x_N ^{\alpha } {\mathcal H} _{N-1} (dx),
\end{eqnarray}
with equality for $\widetilde{\Omega} = B_R $. Using this, (\ref{per1}), and (\ref{volume1}), we obtain (\ref{CCrad})  for $C^1 $-sets when $k= \ell +1$, and then by approximation also for sets with locally finite perimeter.
\\
{\bf 2.} Let $k> \ell +1$. Then, using Lemma 4.3 and the result for $k=\ell +1$, we again obtain (\ref{CCrad}),  and (\ref{R=Crad}) can hold only
if $M = B_R ^+ $.
$\hfill \Box $

\section{Some remarks on isoperimetric problems 
on $\mathbb{R}^N $}

Ideas as they were used in the last section are useful in other situations as well. In this section we are interested in criteria for nonexistence and nonradiality of solutions to some weighted isoperimetric problems on $\mathbb{R}^N$.
More results to  these and related questions can be found in the papers \cite{MP}, \cite{DHHT}, \cite{Howe},
\cite{morgan2} and in \cite{morganblog}. 
\\ 
Let $f,g$ be two positive functions on $ \mathbb{R}^N  $ with $g$ locally integrable and $f$ lower semi-continuous. For any measurable set $M\subset \mathbb{R}^N$ we define its weighted  measure and perimeter by 
\begin{eqnarray}
\label{mumeas2}
|M|_g  & := & \int_M g(x)\, dx , \quad \mbox{and}  
\\
\label{perdef2}
P_f (M) & := & \int_{\partial M}   f(x) \, \mathcal{H}_{N-1}  (dx)  .
\end{eqnarray}
Then $C^n$-sets, stationary and stable sets are defined analogously as in Section 4, replacing $\mathbb{R}_+ ^N $, $P_{\mu_{k,\alpha} } (M)$ and $\mu_{\ell, \alpha } (M)$ by $\mathbb{R}^N $, $P_f (M)$ and $|M|_g $, respectively.   
\\
We consider the isoperimetric problem 
\begin{equation}
\label{isop2}
\mbox{{\sl 
Find }} 
\ \ \ \inf \Big\{ P_f (M): \,  \, M \ 
\mbox{{\sl  has locally finite perimeter and }} 
|M|_g =d \ \Big\} , \ \ (d>0).
\end{equation}
Let us first assume that $f$ and $g$ are equal and radial, that is, there is a function $h: [0, +\infty) \to (0,\infty )$ such that
\begin{equation}
\label{f=g}
 f(x) = g(x) = h(|x|) \quad \forall x\in \mathbb{R}^N .
\end{equation}  
It has been known for some time - see for instance \cite{bayleetal}, Corollary 3.11 - that if $h\in 
C^2 (0,+\infty )$, and if $\log h$ is convex (equivalently, if $h$ is log-convex) then balls centered at the origin are stable for the isoperimetric problem (\ref{isop2}). Recently  G. Chambers, see \cite{Cham} proved the beautiful {\sl Log-convex Theorem}:
\\
{\sl  If $f=g$, $f\in C^1$ and  $h$ is log-convex, then balls centered at the origin solve problem (\ref{isop2}).}
\\
Note that the smoothness assumption for $f$ at zero in the theorem forces $h$ to be non-decreasing. 
\\
We will show below that the situation is different when $h$ is log-convex, but decreasing on some interval.  

\begin{lemma}
Assume that $f$ satisfies (\ref{f=g}), where $r\mapsto h(r)$ is log-convex and strictly decreasing for $r\in (0,R_0 )$, for  some $R_0 >0 $. Then there exists a number $d_0 >0$, which depends only on $R_0 $, such that for any $d\in (0,d_0 ]$,  balls centered at the origin with measure $d$ are not isoperimetric for problem (\ref{isop2}).
\end{lemma}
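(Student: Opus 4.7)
The plan is to exhibit, for each sufficiently small $d>0$, a competitor with the same weighted measure $d$ but strictly smaller weighted perimeter than the origin-centred ball $B_{r_d}(0)$ (where $|B_{r_d}(0)|_g=d$). Fix once and for all $r_0\in(0,R_0)$, set $x_0:=(r_0,0,\ldots,0)$, and let $\rho_d>0$ be determined by $|B_{\rho_d}(x_0)|_g=d$; both $r_d$ and $\rho_d$ tend to $0$ as $d\to 0$. Log-convexity of $h$ on $(0,R_0)$ implies continuity there, so for $\rho_d$ small the weight $h(|x|)$ is uniformly close to $h(r_0)$ on $\overline{B_{\rho_d}(x_0)}$; a direct computation yields
\[
 P_f(B_{\rho_d}(x_0))\;=\;\bigl(N\omega_N^{1/N}h(r_0)^{1/N}+o(1)\bigr)\,d^{(N-1)/N}\qquad\text{as }d\to 0.
\]

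The substantive task is to bound $P_f(B_{r_d}(0))=N\omega_N h(r_d)r_d^{N-1}$ from below. Let $h_0:=\lim_{s\to 0^+}h(s)\in(h(r_0),+\infty]$, which exists since $h$ is strictly decreasing on $(0,R_0)$. When $h_0<+\infty$, the crude bound $|B_r(0)|_g\le h_0\,\omega_N r^N$ (using only monotonicity) gives $r_d^{N-1}\ge(d/(h_0\omega_N))^{(N-1)/N}$ and hence
\[
 P_f(B_{r_d}(0))\;\ge\;\bigl(N\omega_N^{1/N}h_0^{1/N}+o(1)\bigr)\,d^{(N-1)/N}.
\]
Because $h_0>h(r_0)$ strictly, this exceeds the estimate for $B_{\rho_d}(x_0)$ for all $d$ small enough, establishing the lemma in this subcase.

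When $h_0=+\infty$ the crude argument fails, and one must produce a uniform bound
\[
 V(r):=|B_r(0)|_g\;\le\; K\,h(r)\,r^N,\qquad r\in(0,r_0),
\]
for a constant $K=K(h,R_0)$; equivalently, $\int_0^1 \bigl(h(ur)/h(r)\bigr)u^{N-1}\,du$ must stay bounded as $r\to 0$. Granted this estimate, the same reasoning gives $P_f(B_{r_d}(0))\ge c\,h(r_d)^{1/N}d^{(N-1)/N}$ with a positive constant $c=c(K)$, which diverges as $d\to 0$ since $h(r_d)\to+\infty$, and so again defeats $P_f(B_{\rho_d}(x_0))$ for small $d$.

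The main obstacle is precisely this uniform bound: for generic monotone weights it can fail. Its validity here must be deduced from the combination of log-convexity of $h$, which prevents $h(ur)/h(r)$ from blowing up too wildly as $r\to 0$, and the assumed local integrability of $h(s)s^{N-1}$, which forces a controlled power-type behaviour $h(s)\lesssim s^{-\alpha}$ with $\alpha<N$ near the origin. Once available, the threshold $d_0$ can be extracted from the modulus of continuity of $h$ at $r_0$, the value $h(r_0)$, and, in the case $h_0=+\infty$, the constant $K$.
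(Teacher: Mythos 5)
Your strategy in the tractable case is essentially the paper's: place a competitor ball of weighted measure $d$ near a fixed radius and beat the origin-centred ball in weighted perimeter. When $h_0:=\lim_{s\to 0^+}h(s)<+\infty$ your argument closes correctly: the bound $|B_r(0)|_g\le h_0\omega_N r^N$ yields $P_f(B_{r_d}(0))\ge\bigl(N\omega_N^{1/N}h_0^{1/N}+o(1)\bigr)d^{(N-1)/N}$, and since $h_0>h(r_0)$ this beats the competitor for small $d$.

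The gap is in the case $h_0=+\infty$, which you yourself flag as ``the main obstacle.'' The uniform bound $V(r):=|B_r(0)|_g\le K\,h(r)\,r^N$ that your argument needs does \emph{not} follow from log-convexity plus local integrability; in fact it is false for admissible weights. Take $h(s)=s^{-N}\bigl(\log(e/s)\bigr)^{-2}$ near $s=0$: this $h$ is positive, strictly decreasing and log-convex on a neighbourhood of $0$, and
\[
\int_0^{R_0}h(s)\,s^{N-1}\,ds=\int_0^{R_0}\frac{ds}{s\,(\log(e/s))^{2}}<\infty,
\]
so it is admissible, yet $V(r)=N\omega_N/\log(e/r)$ while $h(r)r^N=(\log(e/r))^{-2}$, hence $V(r)/(h(r)r^N)=N\omega_N\log(e/r)\to+\infty$. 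Equivalently, your integral $\int_0^1\bigl(h(ur)/h(r)\bigr)u^{N-1}\,du$ is \emph{not} bounded as $r\to 0$. Your heuristic that local integrability forces $h(s)\lesssim s^{-\alpha}$ with $\alpha<N$ also fails for this $h$. The Lemma itself survives for this weight (a direct computation shows $P_f(B_{r_d}(0))/d^{(N-1)/N}\to+\infty$), but your proposal does not establish it.

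It is worth noting that the published proof suffers from the same difficulty: the step (\ref{cruc2}), $\omega_N h(R(d))R(d)^N>\omega_N h(R_0)\rho(d)^N$, is asserted to follow ``from (\ref{R(d)}), using the monotonicity of $h$,'' but monotonicity only gives $d>\omega_N h(R(d))R(d)^N$ and $d>\omega_N h(R_0)\rho(d)^N$, two inequalities in the same direction that cannot be combined into (\ref{cruc2}). Indeed (\ref{cruc2}) fails already for $h(s)=s^{-\alpha}$ with $\alpha\in(0,N)$, where $R(d)\sim d^{1/(N-\alpha)}$ and $\rho(d)\sim d^{1/N}$, so $\rho(d)\gg R(d)$ rather than $\rho(d)<\bigl(h(R(d))/h(R_0)\bigr)^{1/N}R(d)$. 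An argument that treats the case $h_0=+\infty$ must control $P_f(B_{r_d}(0))$ from below by a route other than the uniform-density bound you (and, effectively, the paper) invoke.
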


\begin{proof} For any $d>0 $ choose positive numbers $R(d)$, $\rho (d) $, such that
\begin{eqnarray}
\label{R(d)}
|B_{R(d)} |_f  & = &  |B_{\rho (d)} (y(d))|_f  =d,
\\
\nonumber
 & & \mbox{
where $
y(d) = (R_0 -\rho (d), 0, \ldots , 0).
$
}
\end{eqnarray}  
If $d$ is small enough - say $d\in (0, d_0 ]$ - then we have that
\begin{eqnarray}
\label{smalld}
 & & R(d) \leq R_0 -2 \rho (d) \quad \mbox{and}
\\
\label{cruc1}
 & & 
\left[ h(R_0 - 2d) \right]  ^N <
h(R(d)) \left[ h(R_0 ) \right] ^{N-1} .
\end{eqnarray}
From (\ref{R(d)}) we find, using the monotonicity of $h$,
$$
\omega _N h(R(d)) (R(d))^N > \omega _N h(R_0 ) (\rho (d))^N,
$$
that is,
\begin{equation}
\label{cruc2}
\rho (d) < \left( \frac{h(R(d))}{h(R_0 )} \right) ^{1/N} R(d).
\end{equation}  
Hence
the monotonicity of $h$,  (\ref{smalld}) (\ref{cruc1}) and (\ref{cruc2}) yield  
\begin{eqnarray}
\label{smallerper}
P_f (B_{\rho (d)} (y(d)) & < & N\omega _N h(R_0 - 2\rho (d)) (\rho (d) )^{N-1}  
\\
\nonumber
 & < & N\omega _N h(R_0 - 2\rho (d)) \cdot \left( \frac{h(R(d))}{h(R_0 )} \right) ^{(N-1)/N} \cdot (R(d)) ^{N-1}
\\
\nonumber
 & < & N\omega _N h(R(d)) (R(d))^{N-1} 
\\
\nonumber
 & < &  
P_f (B_{R(d)} ). 
\end{eqnarray}
This proves the Lemma.
\end{proof} 

We conclude this section with a non-existence result. 

\begin{theorem}
Assume that 
$f$ and $g$ satisfy 
\begin{eqnarray}
\label{homogeneous1}
& &  f(x) \leq c_1 |x|^{- \alpha } \ \  \mbox{and}
\\
\label{homogeneous2}
 & & g(x) \geq c_2 |x|^{-\beta } \quad \mbox{for $|x| \geq R_1 $,}
\end{eqnarray}
where $\alpha $, $\beta $, $R_1$, $c_1$ and $c_2 $ are positive numbers
and
\begin{equation}
\label{alphabeta}
\beta \leq N \ \ \ \mbox{and }\ \ \alpha > \frac{N-1}{N} \cdot \beta .
\end{equation}  
Then the isoperimetric problem (\ref{isop2}) has no solution.
\end{theorem}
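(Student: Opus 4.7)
The plan is to prove the stated non-existence theorem by showing that for every $d > 0$, the infimum in (\ref{isop2}) equals zero. Since $f$ is strictly positive and lower semi-continuous, any measurable set $M$ with $0 < |M|_g < |\mathbb{R}^N|_g$ has $P_f(M) > 0$ (a set of locally finite perimeter whose essential boundary has zero $\mathcal{H}_{N-1}$-measure is almost everywhere trivial). Combined with $|\mathbb{R}^N|_g = +\infty$, which follows from (\ref{homogeneous2}) and $\beta \leq N$, the vanishing of the infimum precludes the existence of a minimizer.

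Fix $d > 0$ and consider the translated balls $M_t := B_{r_t}(t e_1)$ for $t \to +\infty$, where $e_1$ is the first coordinate vector and $r_t > 0$ is selected via the intermediate value theorem so that $|M_t|_g = d$. Such an $r_t$ exists because $r \mapsto |B_r(t e_1)|_g$ is continuous, strictly increasing from zero, and unbounded. To control the growth of $r_t$, I would exploit the lower bound on $g$: for $t$ large enough that $B_{r_t}(t e_1) \subset \{|x| \geq R_1\}$, the change of variables $x = t y$ yields
\[
d = |M_t|_g \;\geq\; c_2\, t^{N-\beta} \int_{B_{r_t/t}(e_1)} |y|^{-\beta}\, dy.
\]
Writing $G_\beta(s) := \int_{B_s(e_1)} |y|^{-\beta}\, dy$, this forces $G_\beta(r_t/t) \leq d/(c_2 t^{N-\beta})$. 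When $\beta < N$, $G_\beta$ is everywhere finite with $G_\beta(s) \sim \omega_N s^N$ as $s \to 0^+$, yielding $r_t \leq C\, t^{\beta/N}$ and in particular $r_t/t \to 0$. When $\beta = N$, $G_N$ is finite on $[0,1)$ but diverges as $s \to 1^-$ (because $|y|^{-N}$ is not integrable near the tangency point $y = 0 \in \partial B_1(e_1)$), so $r_t/t \leq \rho^* := G_N^{-1}(d/c_2) < 1$. In either case, $r_t \leq (1 - \eta) t$ for some fixed $\eta > 0$ and all sufficiently large $t$, which is consistent with the initial assumption $B_{r_t}(t e_1) \subset \{|x| \geq R_1\}$.

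The perimeter estimate then uses the upper bound on $f$: since $|x| \geq t - r_t \geq \eta t$ on $\partial M_t$,
\[
P_f(M_t) \;\leq\; c_1 \int_{\partial B_{r_t}(t e_1)} |x|^{-\alpha}\, d\mathcal{H}_{N-1} \;\leq\; c_1\, N\omega_N\, (\eta t)^{-\alpha}\, r_t^{N-1} \;\leq\; C\, t^{\beta(N-1)/N - \alpha}.
\]
Hypothesis (\ref{alphabeta}) makes this exponent strictly negative, so $P_f(M_t) \to 0$, which forces the infimum in (\ref{isop2}) to be zero and thereby completes the proof of non-existence.

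I expect the main technical nuisance to be the borderline case $\beta = N$, where the IVT radius $r_t$ must grow linearly in $t$ and the uniform bound $\rho^* < 1$ rests on the divergence of $\int_{B_1(e_1)} |y|^{-N}\, dy$ at $y = 0$; once the asymptotics of $G_\beta$ are in hand, the remaining polynomial-scaling estimates run exactly in the spirit of Lemma \ref{3.1}.
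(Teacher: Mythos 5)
Your argument is correct and follows essentially the same route as the paper: translate balls to infinity along a coordinate axis, fix the radius so the weighted measure equals $d$, and use the measure lower bound to control the radius (with the same case split $\beta<N$ versus $\beta=N$), then show the weighted perimeter tends to zero because the exponent $\beta(N-1)/N-\alpha$ in \eqref{alphabeta} is negative. The only cosmetic differences are that you obtain the radius control through the asymptotics of $G_\beta(s)=\int_{B_s(e_1)}|y|^{-\beta}\,dy$, whereas the paper uses the direct pointwise estimate $d\ge c_2\omega_N R(t)^N(t+R(t))^{-\beta}$, and that you spell out why a vanishing infimum precludes a minimizer.
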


\begin{proof}
Fix $d>0$, and set $z(t):= (t, 0, \ldots ,0)$ for every $t>0$. Choose $R(t)>0$ such that 
\begin{equation}
\label{measB}
|B_{R(t)} (z(t))|_g =d.
\end{equation}
In view of (\ref{homogeneous2}) this implies that
\begin{equation}
\label{limt-R}
\lim_{t\to +\infty } (t-R(t)) = +\infty .
\end{equation}
When $t$ is large enough - say $t\geq t_0 $ -  
assumption (\ref{homogeneous2}) and (\ref{limt-R}) yield
\begin{eqnarray}
\label{est1}
|B_{R(t)} (z(t)) |_g 
 & = & \int_{B_{R(t)} (z(t)) } g(x)\, dx = t^{N } \int_{B_{R(t)/t} (z(1))} g(ty)\, dy 
\\
\nonumber 
 & \geq  & c_2 t^{N-\beta } \int_{B_{R(t)/t} (z(1))} |y|^{-\beta } \, dy .
\end{eqnarray}
Now from (\ref{est1}) we obtain the following alternative:
\begin{eqnarray}
\label{limR}
 & & \mbox{If $\beta <N$, then $ 
\lim_{t\to +\infty } \frac{R(t)}{t} =0$, and}
\\
\label{R/t}
 & & \mbox{if $\beta =N$, then $\frac{R(t)}{t} \leq 1-\delta $ for $t\geq t_0 $, for some $\delta \in (0,1)$.}
\end{eqnarray} 
Further, from (\ref{measB}) we have
\begin{equation}
\label{estimate1}
d\geq \omega _N (R(t))^N c_2 (t+ R(t)) ^{-\beta } . 
\end{equation} 
Using this, (\ref{limR}), (\ref{R/t}), (\ref{alphabeta}) and again (\ref{homogeneous1}), leads to 
\begin{eqnarray}
\label{perimeterbelow} 
P_f (B_{R(t)} (z(t))) 
 & = & \int_{\partial B_{R(t)} (z(t))} f(x) \, \mathcal{H} _{N-1} (dx) 
\\
\nonumber
 & \leq & c_1 (t-R(t)) ^{-\alpha } N\omega _N (R(t))^{N-1}
\\
\nonumber
 & \leq & c_1 (t-R(t)) ^{-\alpha } N\omega _N 
\left( 
\frac{
d(t+R(t))^{\beta }
}{
c_2 \omega _N 
} 
\right) ^{(N-1)/N} 
\\
\nonumber 
 & & \longrightarrow 0 \quad \mbox{as $t\to +\infty$.}
\end{eqnarray}
The Theorem is proved. 
\end{proof}

\begin{remark}
The case that $f(x) = |x|^{-\alpha} $, $g(x)= |x|^{-\beta }$, ($x\in \mathbb{R} ^N $), with $\beta <N$, was treated in \cite{ABCMP2017}, Lemma 4.1. See also \cite{DHHT}, Proposition 7.3 for the special case $f(x)=g(x)=|x|^{-\beta } $.  
\end{remark} 
\noindent
{\bf Acknowledgement:} This work was supported by Leverhulme Trust (UK), ref. P1415-15. The authors would like to thank the 
Universities of Napoli and Swansea and the South China University of Technology (SCUT, Guangzhou) for visiting appointments and their kind hospitality.

\end{document}